\documentclass[12pt]{amsart}
\usepackage{amsmath,amssymb,amscd}

\newtheorem{propo}{Proposition}[section]
\newtheorem{corol}[propo]{Corollary}
\newtheorem{theor}[propo]{Theorem}
\newtheorem{lemma}[propo]{Lemma}

\theoremstyle{definition}
\newtheorem{defin}[propo]{Definition}
\newtheorem{examp}[propo]{Example}

\theoremstyle{remark}
\newtheorem{remar}[propo]{Remark}

\newcommand{\vp}{\varphi}
\newcommand{\mb}{\mathbb}

\newcommand{\ot}{\otimes}

\newcommand{\bean}{\begin{eqnarray}}
\newcommand{\eean}{\end{eqnarray}}
\newcommand{\bea}{\begin{eqnarray*}}
\newcommand{\eea}{\end{eqnarray*}}
\newcommand{\bsa}{\begin{subarray}{c}}
\newcommand{\esa}{\end{subarray}}
\newcommand{\bi}{\begin{itemize}}
\newcommand{\ei}{\end{itemize}}
\newcommand{\ben}{\begin{enumerate}}
\newcommand{\een}{\end{enumerate}}

\newcommand {\CC} {\mathbb{C}}

\newcommand {\ZZ} {\mathbb{Z}}
\newcommand {\NN} {\mathbb{N}}
\newcommand{\id}{\mathrm{id}}
\newcommand{\BAR}{\overline}

\newcommand{\ii}{\mathrm{i}}
\newcommand{\cnst}{\mathrm{c}}
\DeclareMathOperator{\SL}{SL}
\DeclareMathOperator{\diag}{diag}

\newcommand{\defst}[1]{{\it #1}}

\numberwithin{equation}{section}

\title[$V_L^+$ and type $D^{(1)}$ level $2$]
{An isomorphism between the fusion algebras of $V_L^+$ and type $D^{(1)}$
level $2$}

\author{M.~Cuntz}
\address{Michael Cuntz,
Fachbereich Mathematik,
Universit\"at Kaiserslau\-tern,
Postfach 3049,
D-67653 Kaiserslautern, Germany}
\email{cuntz@mathematik.uni-kl.de}

\author{C.~Goff}
\address{Christopher Goff,
Mathematics Department,
University of the Pacific,
Stockton, CA  95211, USA}
\email{cgoff@pacific.edu}

\begin{document}

\begin{abstract}
The fusion algebra of the vertex operator algebra $V_L^+$ for a rank $1$ even lattice
$L$ is explicitly shown to be isomorphic to the fusion algebra of the Kac-Moody algebra of type
$D^{(1)}$ at level $2$ in almost all cases.
\end{abstract}

\maketitle

\section{Introduction}

In this article, we prove:

\vspace{3mm}

\noindent \textbf{Theorem \ref{mainthm}.}\hspace{-3mm}  \emph{
Let $L = \sqrt{2\ell}\, \ZZ$ for some $\ell \in \NN$. Then the fusion algebra for the vertex operator
algebra $V_L^+$ is isomorphic to the fusion algebra
\[
\left\{ \begin{array}{cl} D^{(1)}_{\ell,2} & \text{if $\ell \ge 3$} \\
A^{(1)}_{1,2} \ot A^{(1)}_{1,2} & \text{if $\ell = 2$} \\
A^{(1)}_{7,1} \cong \ZZ[\ZZ/8\ZZ] & \text{if $\ell = 1$}.   \end{array} \right.
\] 
(Recall that $D^{(1)}_{3,2} \cong A^{(1)}_{3,2}$.)
}

\vspace{3mm}

The main part of the proof consists of an explicit computation of the
$S$-matrix of a Kac-Moody algebra of type $D^{(1)}$ at level $2$. This enables us to find the structure constants directly, avoiding the use of
algorithms such as the one given in \cite{aFsF}.
To obtain the isomorphism, we compare our structure constants
with those given for the vertex operator algebra $V_L^+$ in \cite{tAcDhL}.
As a consequence, we also obtain a bijection between the sets of
irreducible representations.

This work has appeared elsewhere in the physics literature.  Therefore our strictly computational proof serves only as further mathematical confirmation for this well-known physical result.

The paper is organized as follows: section 2 deals with preliminaries about fusion algebras and trigonometric (mostly cosine) identities, a surprising range of which were very useful to our proof.  We also describe the Kac-Peterson matrix of type $X_{\ell}^{(1)}$ or $A^{(2)}_{2\ell}$ and give the explicit formula we use (\ref{kpmat}).  Section 3 specializes this discussion to type $D$, with the important Example~\ref{exD2} of $D^{(1)}_{\ell}$ at level 2 (denoted $D^{(1)}_{\ell,2}$).  The notation we set up there is used throughout the rest of the paper.  Section \ref{extpow} discusses exterior powers: Proposition~\ref{S=detR} gives a variety of instances in which the Kac-Peterson formula reduces to the determinant of a certain matrix.  In section 5, the lengthiest and most computational section, we work out each entry in the $S$-matrix for $D_{\ell,2}^{(1)}$ and explicitly give the character table, which we call the $s$-matrix.
The fusion rules of the vertex operator algebra (VOA) $V_L^+$ arising from a rank 1 lattice $L$ are listed in section 6. Proofs for these rules can be found in \cite{tA} or \cite{tAcDhL}.  Finally, section 7 contains the proof of the main theorem.

\section{Preliminaries} \label{seccos}

\subsection{Fusion algebras}
We give here a short introduction to the notion of fusion algebras;
for more details, see for example \cite{mC1}.
Fusion algebras are algebras with the same properties as
Grothendieck rings of semisimple tensor categories. So, for example,
character rings of finite groups are fusion algebras. One should
imagine them as lattices with an algebra structure:

\begin{defin}
Let $R$ be a finitely generated commutative $\ZZ$-algebra
which is a free $\ZZ$-module with basis
$B=\{b_0=1_R,\ldots,b_{n-1}\}$ and \defst{structure constants}
\[ b_i b_j = \sum_{k=0}^{n-1} N_{ij}^k b_k, \quad N_{ij}^k\in \ZZ_{\ge 0} \]
for $0\le i,j < n$.
Assume that there is an involution $\sim : R \rightarrow R$
which is a $\ZZ$-module homomorphism such that
\[ \tilde B= B,\quad N_{\tilde i\tilde j}^k=N_{i j}^{\tilde k},
\quad N_{\tilde ij}^0 = \delta_{i,j} \]
for all $0\le i,j,k < n$,
where $\tilde i$ is the index with $\tilde{b_i}=b_{\tilde i}$.
Then we call $(R,B)$ a \defst{fusion algebra}.
\end{defin}

A fusion algebra $R$ has many nice properties. The most important
one is that $R\otimes_\ZZ \CC$ is semisimple which comes from the
fact that it is a symmetric algebra with respect to $\sim$.
Since this is a commutative finite dimensional algebra over $\CC$, there is
a base change matrix to the algebra $\CC^n$. We denote such a base
change by $s$ and call it an \defst{$s$-matrix of $R$}. This matrix is the
\defst{character table} of $R$, because its rows correspond
to the irreducible characters of $R$: The entries of a row are the
values of a representation of $R$ at the basis elements $B$.
This also implies that the rows of $s$ are orthogonal.

A fusion algebra is completely determined by its $s$-matrix.
Permuting rows does not change the algebra; permuting columns
changes the ordering of $B$.
\label{moddat}
If a fusion algebra is the Grothendieck ring of a modular tensor
category, then what we call its $s$-matrix is almost its $S$-matrix.
In this case one has also a $T$-matrix; the matrices $(S,T)$
define a representation of the modular group and are therefore
called a \emph{modular datum} (see for example \cite{tG2} or \cite{mC2}
for an exact definition).
It is then possible to arrange the rows and columns of $s$ in such a way that
the matrix $(s_{i,j}s_{0,i})_{i,j}$ is symmetric and
\[ s_{i,j} = \frac{S_{i,j}}{S_{i,0}}. \]
The matrix $S$, if it exists, is not uniquely determined by $s$.
Nevertheless, if $R$ comes from a modular datum $(S,T)$ then we call $S$
the \defst{$S$-matrix of $R$}. It is easy to see in this case that the above structure
constants are given by the formula (Verlinde's formula)
\begin{equation}\label{verl}
N_{ij}^m:=\sum_{k=0}^{n-1} \frac{S_{k,i}S_{k,j}\BAR{S_{k,m}}}{S_{k,0}}
\end{equation}
for all $0\le i,j,m \le n-1$.
This formula is nothing more than the fact that $R$ is the lattice
spanned by the columns of $s$ with componentwise multiplication.

\subsection{Cosine Identities and Roots of Unity}

We fix $\ii$ to be a square root of $-1$ and define $\zeta_n:=\exp\left(\frac{2\pi \ii}{n}\right)$ for $n \in \mb{N}$. 

Details for our proof require some well-known cosine identities which we reproduce here.  Let $\ell \in \mb{N}$ and let $x\notin 2\pi\ZZ$.  Then the following identities can be verified using geometric series of complex exponentials.
{\small
\bean 
1 + 2\cos x + 2\cos 2x + \ldots + 2\cos \ell x & = & \frac{\sin(\frac{(2\ell +1)x}{2})}{\sin(\frac{x}{2})} \label{DK} \\
\hspace{1cm} \cos(\vp) + \cos(\vp + x) + \ldots + \cos(\vp + \ell x) & = & \frac{\sin(\frac{(\ell+1)x}{2})\cos(\vp + \frac{\ell x}{2})}{\sin(\frac{x}{2})}
\label{cosphi}
\eean
}

Define 
\[
c(j):=2\cos \left(\frac{2\pi j}{2\ell}\right)\ \textrm{and}\ s(j):=2\sin \left(\frac{2\pi j}{2\ell}\right).
\]
Note that for $j \in \ZZ$, $\zeta_{2\ell}^j + \zeta_{2\ell}^{-j} = c(j)$ and $\zeta_{2\ell}^j - \zeta_{2\ell}^{-j} = \ii\, s(j)$.

We immediately obtain several trivial identities, such as:
$c(\alpha)c(\beta) = c(\alpha + \beta) + c(\alpha - \beta)$ (for all $\alpha,\beta$) and $c(\ell m + j) = c(\ell m - j) = (-1)^m c(j)$ (for $m \in \ZZ$), which we use in the sequel, among others.

Define $\rho_j := \frac{1}{1+\delta_{0,j} + \delta_{\ell,j}}$ for $0 \leq j \leq \ell$ (and note that $\rho_j = \rho_{\ell-j}$).  Also, let $\rho_{i,j} := \frac{\rho_i\rho_j}{2\ell}$.

\begin{lemma} \label{lemcos}  Let $m \in \mb{Z}$ with $0 \leq m \leq 2\ell$.  Then the following identities hold.
\bean
\label{rhocij} \sum_{j=0}^\ell \rho_j c(mj) & = & \left\{ \begin{array}{cl} 
0 & \text{if}\ m \notin \{0,2\ell\} \\
2\ell & \text{if}\ m \in \{0,2\ell\} \\
\end{array} \right. \\
 \label{oddcos} \sum_{j=1}^\ell c((2j-1)m/2) & = & \left\{ \begin{array}{cl} 0 & \text{if}\ m \notin \{0,2\ell\} \\
2\ell & \text{if}\ m = 0 \\
-2\ell & \text{if}\ m = 2\ell
\end{array} \right.
\eean
\end{lemma}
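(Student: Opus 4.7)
For identity (\ref{rhocij}), my plan is to unfold the weighted sum over $\{0, 1, \ldots, \ell\}$ into an unweighted sum over a full period $\{0, 1, \ldots, 2\ell-1\}$. Since $c(j) = 2\cos(\pi j/\ell)$ is even and has period $2\ell$, we have $c(m(2\ell - j)) = c(-mj) = c(mj)$, so terms pair up across the midpoint. The factors $\rho_0 = \rho_\ell = 1/2$ (with $\rho_j = 1$ for $0 < j < \ell$) are exactly what is needed for the folding to balance, giving
\[
\sum_{j=0}^\ell \rho_j\, c(mj) \;=\; \frac{1}{2} \sum_{j=0}^{2\ell - 1} c(mj) \;=\; \frac{1}{2}\sum_{j=0}^{2\ell - 1}\left(\zeta_{2\ell}^{mj} + \zeta_{2\ell}^{-mj}\right).
\]
Each geometric series equals $2\ell$ when $\zeta_{2\ell}^{\pm m} = 1$ (which in the range $0 \le m \le 2\ell$ forces $m \in \{0, 2\ell\}$) and vanishes otherwise.

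For identity (\ref{oddcos}), I would first dispatch the boundary cases directly: when $m = 0$ every term equals $c(0) = 2$ so the sum is $2\ell$; when $m = 2\ell$, the stated identity $c(\ell m' + j) = (-1)^{m'} c(j)$ with $m' = 2j-1$ yields $c(\ell(2j-1)) = -c(0) = -2$, giving a total of $-2\ell$. For $0 < m < 2\ell$, I set $x = \pi m/\ell$ and rewrite $c((2j-1)m/2) = 2\cos((j-1)x + x/2)$; after the shift $k = j-1$, I apply identity (\ref{cosphi}) with $\varphi = x/2$ and the summation taken to index $\ell - 1$. This collapses the sum to $\sin(\ell x/2)\cos(\ell x/2)/\sin(x/2) = \sin(\pi m)/(2\sin(x/2))$, which vanishes because $\sin(\pi m) = 0$, while the denominator $\sin(\pi m/(2\ell))$ is nonzero in this range.

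I do not expect a real obstacle; both identities reduce quickly to tools already set up in the subsection. The only care needed is to track the endpoint weights $\rho_0 = \rho_\ell = 1/2$ when unfolding the symmetric sum in (\ref{rhocij}), and to verify that the denominator $\sin(x/2)$ appearing from (\ref{cosphi}) does not vanish in the regime $0 < m < 2\ell$ used to treat the non-boundary case of (\ref{oddcos}).
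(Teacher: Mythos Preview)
Your proof is correct. For identity~(\ref{oddcos}) your argument is essentially the paper's: both dispatch the boundary cases by hand and then apply~(\ref{cosphi}) to collapse the sum to $s(\ell m)/s(m/2)$ (you drop a harmless factor of $2$ coming from $c = 2\cos$, but the conclusion is unaffected since the numerator vanishes). For identity~(\ref{rhocij}) your route genuinely differs: the paper invokes~(\ref{DK}) for generic $m$ and then treats $m=\ell$ as a separate alternating-sum case, whereas you unfold the $\rho_j$-weighted half-period sum into the full-period sum $\frac{1}{2}\sum_{j=0}^{2\ell-1} c(mj)$ and evaluate it as a geometric series in $\zeta_{2\ell}$. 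Your unfolding is a bit slicker, handling all $m$ uniformly without singling out $m=\ell$; the paper's version, on the other hand, stays entirely within the real-cosine toolkit already set up and keeps closer contact with~(\ref{DK}).
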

\begin{proof}  To see (\ref{rhocij}), first note that if $m \notin \{0,\ell,2\ell\}$, then (\ref{rhocij}) follows from (\ref{DK}) and other trigonometric identities.  When $m \in \{0,2\ell\}$, we have $\sum_j 2\rho_j = 2\ell$.  When $m = \ell$, we have $\sum_j 2(-1)^j \rho_j = 1 - 2 + 2 - \ldots + (-1)^\ell = 0$.

For (\ref{oddcos}), the special cases $m \in \{0, 2\ell\}$ are clear.  Let $m \notin \{0, 2\ell\}$.  Using (\ref{cosphi}) we obtain
$\sum_{j=1}^\ell c((2j-1)m/2) = \frac{s(\ell m/2)c(\ell m/2)}{s(m/2)} = \frac{s(\ell m)}{s(m/2)} = 0$.
\end{proof}

The following corollary lists certain matrices that arise in section~\ref{D2}, along with some properties and their determinants (up to a sign, which will be settled in section~\ref{D2}).  Each equation follows from a cosine identity, such as (\ref{oddcos}), and so we leave proofs to the reader.  The lemma then relates two of these matrices.
\begin{corol} As before, let $\ell \in \NN$.
\bi
\item \emph{(The $M$ Matrix)}
Let $M$ and $N$ be $(\ell + 1) \times (\ell + 1)$ matrices with $M_{i,j} := c(ij)$ and $N_{i,j}:= \rho_{i,j} c(ij)$ for $0 \leq i,j \leq \ell$. Then 
\begin{equation} \label{mmat}
N = M^{-1}\quad \textrm{and so}\quad (\det M)^2 = 16(2\ell)^{\ell+1}.
\end{equation}

\item \emph{(The $X$ Matrix)}
Let $X$ be an $\ell \times \ell$ matrix with $X_{i,j}:=c\left(\frac{(2i-1)(2j-1)}{4}\right)$ for $1 \leq i,j \leq \ell$.  Then
\begin{equation} \label{xmat}
X^{2} = 2\ell I_\ell\quad \textrm{and so}\quad (\det X)^2 = (2\ell)^\ell.
\end{equation}

\item \emph{(The $\Omega$ Matrix)}
Let $\Omega$ be an $\ell \times \ell$ matrix with $\Omega_{i,j}:=c\left(\frac{(i-1)(2j-1)}{2}\right)$ for $1 \leq i,j \leq \ell$.  Then 
\begin{equation} \label{Omat}
(\det \Omega)^2 = 2(2\ell)^{\ell}.
\end{equation}
\ei
\end{corol}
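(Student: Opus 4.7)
The plan is to prove each of the three claims by computing a suitable matrix product (essentially orthogonality of cosine vectors) and then reading off the determinant, using only the product-to-sum rule $c(\alpha)c(\beta) = c(\alpha+\beta)+c(\alpha-\beta)$ together with the identities of Lemma~\ref{lemcos}.

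For the $M$ matrix, I would first establish $N = M^{-1}$ directly. Expanding and applying the product-to-sum formula gives
$$(MN)_{i,k} = \frac{\rho_k}{2\ell}\sum_{j=0}^{\ell}\rho_j\bigl(c(j(i+k))+c(j(i-k))\bigr).$$
Since $0\le i,k\le \ell$ forces $i+k\in[0,2\ell]$ and $|i-k|\in[0,\ell]$, identity (\ref{rhocij}) contributes $2\ell$ only when $i+k\in\{0,2\ell\}$ or when $i=k$. A short case split on $k=0$, $k=\ell$, and $0<k<\ell$, paying attention to $\rho_0=\rho_\ell=\tfrac12$ and $\rho_k=1$ otherwise, delivers $(MN)_{i,k}=\delta_{i,k}$. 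Writing $D=\diag(\rho_0,\ldots,\rho_\ell)$, the relation $N=\tfrac{1}{2\ell}DMD$ with $\det D = \tfrac14$ yields $1=\det M\det N = \frac{(\det M)^2}{16(2\ell)^{\ell+1}}$, which is the stated identity.

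The $X$ and $\Omega$ cases are analogous: I compute $X^2$ and $\Omega\Omega^T$ by the same product-to-sum trick, reducing each entry to two sums of the form $\sum_{j=1}^\ell c((2j-1)m/2)$ to which (\ref{oddcos}) applies. For $X$ one gets
$$(X^2)_{i,k} = \sum_{j=1}^{\ell}\bigl(c((2j-1)(i+k-1)/2)+c((2j-1)(i-k)/2)\bigr);$$
since $1\le i,k\le\ell$ forces $i+k-1\in[1,2\ell-1]$, the first sum always vanishes and the second sum contributes $2\ell\delta_{i,k}$, so $X^2=2\ell I_\ell$ and $(\det X)^2 = (2\ell)^\ell$. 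For $\Omega$ the analogous calculation yields
$$(\Omega\Omega^T)_{i,k} = \sum_{j=1}^{\ell}\bigl(c((2j-1)(i+k-2)/2)+c((2j-1)(i-k)/2)\bigr);$$
now $i+k-2\in[0,2\ell-2]$ so (\ref{oddcos}) contributes $2\ell$ exactly when $i=k=1$, while the second sum contributes $2\ell$ precisely when $i=k$. Thus $\Omega\Omega^T=\diag(4\ell,2\ell,\ldots,2\ell)$ and $(\det\Omega)^2 = 4\ell\cdot(2\ell)^{\ell-1} = 2(2\ell)^\ell$.

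The main obstacle is purely bookkeeping: one has to enumerate the allowed values of $i\pm k$ (respectively shifted by $1$ or $2$) carefully to decide when each cosine sum vanishes, and for $M$ one must correctly propagate the half-integer weights $\rho_0,\rho_\ell = \tfrac12$ through the boundary cases. No ingredients beyond Lemma~\ref{lemcos} are needed.
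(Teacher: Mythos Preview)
Your proposal is correct and carries out precisely the argument the paper intends: the paper leaves the proof to the reader, noting only that ``each equation follows from a cosine identity, such as (\ref{oddcos}),'' and you have filled in exactly those details via product-to-sum and Lemma~\ref{lemcos}. The bookkeeping in all three cases checks out.
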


\begin{lemma}\label{OXmat}
$\det X = \frac{1}{\sqrt{2}} \det \Omega$, where $\sqrt{2}$ is the \emph{positive} square root
of $2$.
\end{lemma}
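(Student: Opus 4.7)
The plan is to exhibit an explicit matrix factorization relating $X$ and $\Omega$, and then to pin down the overall scalar by evaluating a classical cosine product. Writing $u_j := \frac{(2j-1)\pi}{4\ell}$ we have
\[
X_{i,j} = 2\cos\bigl((2i-1)u_j\bigr), \qquad \Omega_{i,j} = 2\cos\bigl(2(i-1)u_j\bigr),
\]
so the rows of $X$ involve odd multiples of the $u_j$ while the rows of $\Omega$ involve even multiples. The sum-to-product identity
\[
\cos\bigl((2i-1)u\bigr) + \cos\bigl((2i-3)u\bigr) = 2\cos(u)\cos\bigl(2(i-1)u\bigr)
\]
immediately gives, for $i\ge 2$, $X_{i,j} + X_{i-1,j} = 2\cos(u_j)\,\Omega_{i,j}$, while for $i=1$ one has $2 X_{1,j} = 2\cos(u_j)\,\Omega_{1,j}$ since $\Omega_{1,j}=2$.

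Next, I would package these identities into a single matrix equation. Let $L$ be the lower bidiagonal $\ell\times\ell$ matrix with $L_{1,1}=2$, $L_{i,i}=1$ for $i\ge 2$, $L_{i,i-1}=1$ for $i\ge 2$, and all other entries $0$; and let $D := \diag\bigl(2\cos(u_1),\dots,2\cos(u_\ell)\bigr)$. Then the row identities above are precisely the equality $L X = \Omega D$. Taking determinants, $\det L \cdot \det X = \det\Omega\cdot \det D$, and since $\det L = 2$, this yields $2\det X = \det\Omega \cdot \det D$.

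The remaining task, which is the only substantive one, is to show $\det D = \sqrt{2}$ (positive). Each $u_j \in (0,\pi/2)$, so each factor $2\cos(u_j)$ is strictly positive and the sign is automatic; it thus suffices to compute the square. I would use the substitution $\cos((2j-1)\pi/(4\ell)) = \sin((2\ell-2j+1)\pi/(4\ell))$ to rewrite
\[
\prod_{j=1}^{\ell} 2\cos(u_j) = \prod_{j=1}^{\ell} 2\sin\bigl((2j-1)\pi/(4\ell)\bigr),
\]
and then exploit the classical identity $\prod_{k=1}^{n-1}2\sin(k\pi/n) = n$ applied to $n=4\ell$ and $n=2\ell$: splitting the $n=4\ell$ product into its even-index and odd-index parts gives $4\ell = 2\ell \cdot \prod_{k\text{ odd}} 2\sin(k\pi/(4\ell))$, and pairing $k$ with $4\ell - k$ in the odd product shows this latter product is the square of $\prod_{j=1}^\ell 2\sin((2j-1)\pi/(4\ell))$. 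Hence $(\det D)^2 = 2$, and positivity forces $\det D = +\sqrt 2$. Plugging back in, $\det X = \frac{\sqrt 2}{2}\det\Omega = \frac{1}{\sqrt 2}\det\Omega$, as required.

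The main obstacle is the final sine/cosine product: the magnitude $|\det X|/|\det\Omega|=1/\sqrt 2$ is already forced by (\ref{xmat}) and (\ref{Omat}), so the real content of the lemma is the sign, and nailing this requires an explicit (signed) evaluation of $\det D$ rather than an argument based on squared quantities alone.
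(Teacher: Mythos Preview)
Your proof is correct. The matrix identity $LX=\Omega D$ is verified exactly as you say, and the evaluation of $\det D=\prod_{j=1}^{\ell}2\cos(u_j)=\sqrt{2}$ via the classical formula $\prod_{k=1}^{n-1}2\sin(k\pi/n)=n$ is sound; the pairing $k\leftrightarrow 4\ell-k$ indeed shows that the odd-index product over $1\le k\le 4\ell-1$ is the square of your $\det D$, and the positivity of each factor fixes the sign.

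The paper's argument is close in spirit but organized differently. It performs column scaling on $X$ (dividing column $j$ by $\xi^{2j-1}+\xi^{-(2j-1)}=2\cos(u_j)$) followed by row operations to reach $\Omega$, giving $\det X=\alpha\det\Omega$ with $\alpha=\prod_j 2\cos(u_j)$; it then observes $\alpha>0$ and invokes (\ref{xmat}) and (\ref{Omat}) to conclude $\alpha=1/\sqrt{2}$. Your factorization $LX=\Omega D$ encodes essentially the same row/column manipulation, but via the \emph{sum} identity $X_{i,j}+X_{i-1,j}=2\cos(u_j)\Omega_{i,j}$ rather than a difference, which introduces the extra factor $\det L=2$; this is why your scalar is $\det D/2$ rather than the paper's $\alpha$. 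The substantive distinction is in the last step: you compute $\det D=\sqrt{2}$ from first principles, whereas the paper bootstraps the value of $\alpha$ from the already-established magnitudes of $\det X$ and $\det\Omega$. Your route is self-contained and in fact supplies an independent proof of the magnitude relation, not just the sign; the paper's route is shorter given the preceding corollary.
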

\begin{proof}
Let $\xi:=\zeta_{8\ell}$, so $X_{i,j}=\xi^{(2i-1)(2j-1)}+\xi^{-(2i-1)(2j-1)}$.
Dividing each $j$-th column of $X$ by $(\xi^{(2j-1)}+\xi^{-(2j-1)})$ divides
the determinant by $\alpha:=\prod_{j=1}^\ell(\xi^{(2j-1)}+\xi^{-(2j-1)})$, hence
\begin{eqnarray*}
\det X &=& \alpha \det \left(\xi^{(2-2i)(2j-1)}+\xi^{(4-2i)(2j-1)}+\ldots
+\xi^{(2i-2)(2j-1)}\right)_{i,j} \\
&\stackrel{(*)}{=}&
\alpha \det \left(\xi^{(2-2i)(2j-1)}+\xi^{(2i-2)(2j-1)}\right)_{i,j} \\
&=& \alpha \det \left(c\left(\frac{(i-1)(2j-1)}{2}\right) \right)_{i,j}=
\alpha \det \Omega
\end{eqnarray*}
where $(*)$ denotes subtracting row $i$ from row $i+1$ for $i=1,\ldots,\ell-1$. Now
\[ \alpha = \prod_{j=1}^\ell(\xi^{(2j-1)}+\xi^{-(2j-1)})
= \prod_{j=1}^\ell \cos \frac{2\pi (2j-1)}{8\ell}>0 \]
and thus $\alpha=\frac{1}{\sqrt{2}}$ by (\ref{xmat}) and (\ref{Omat}).
\end{proof}

\subsection{Kac-Peterson matrices}

Let ${\frak g}(A)$ be an affine Kac-Moody algebra corresponding to an
$n\times n$ generalized Cartan matrix $A$ of rank $\ell$, with Cartan subalgebra $\frak h$ and with
$\langle \:,\: \rangle : {\frak h}\times{\frak h^*} \rightarrow \CC$
as its corresponding pairing (we use the notation of \cite{vK}). Define
\[ P:=\{\lambda\in {\frak h^*} \mid \langle \lambda,\alpha_i^\vee\rangle \in \ZZ,\quad i=0,\ldots,n-1 \}, \]
\[ P_+:=\{\lambda\in P \mid \langle \lambda,\alpha_i^\vee\rangle \ge 0,\quad
i=0,\ldots,n-1 \}. \]
The set $P$ is called the \defst{weight lattice}, and
elements of $P$ (resp. $P_+$) are called
\defst{integral} (resp. \defst{dominant integral}) \defst{weights}.

Now let ${\frak g}(A)$ be of arbitrary untwisted type $X_\ell^{(1)}$
or $A_{2\ell}^{(2)}$ (then $n=\ell+1$).
The {\it fundamental weights} $\Lambda_i \in P$,
$i=0,\ldots,\ell$ are given by the equations
\[ \langle \Lambda_i,\alpha_j^\vee\rangle = \delta_{ij}, \quad \langle \Lambda_i,d\rangle=0\]
for $j=0,\ldots,\ell$, where $d\in {\frak h}$ is given
by $\langle \alpha_i,d\rangle=\delta_{i,0}$.
The set $\{ \alpha_0^\vee,\ldots,\alpha_\ell^\vee,d \}$ is
a basis of $\frak h$ and
$\{ \alpha_0,\ldots,\alpha_\ell,\Lambda_0 \}$ is a basis of $\frak h^*$.
The fundamental weights $\BAR\Lambda_i$ of the
finite dimensional Lie algebra $\frak g^\circ$ satisfy
\[ \Lambda_i = \BAR \Lambda_i + a_i^\vee\Lambda_0, \]
($\BAR \Lambda_0=0$ because $a_0^\vee=1$; the vector of $a_i^\vee$ is
an element of the kernel of $A$, see \cite{vK}, 6.1).

For each positive integer $k$, let
$P_+^k\subseteq P_+$ be the finite set
\begin{equation} \label{P+k} 
P_+^k := \Big\{ \sum_{j=0}^\ell \lambda_j\Lambda_j\mid
\lambda_j\in\ZZ_{\geq 0}, \sum_{j=0}^\ell a_j^\vee\lambda_j=k \Big\}. \end{equation}
Kac and Peterson defined a natural $\CC$-representation of the group $\SL_2(\ZZ)$
on the subspace spanned by the affine characters of ${\frak g}$ which are indexed
by $P_+^k$.
The image of $\tiny \begin{pmatrix} 0&-1\\ 1&0 \end{pmatrix}$ under
this representation is the so-called {\it Kac-Peterson matrix}.  (See Theorem $13.8$ of \cite{vK}.)
For affine algebras of type $X_\ell^{(1)}$ or $A_{2\ell}^{(2)}$, this matrix is
\begin{equation}\label{kpmat}
S_{\Lambda,\Lambda'} = \cnst \sum_{w\in W^\circ} \det(w)
\exp\left({-\, \frac{2\pi \ii (\BAR\Lambda+\bar\rho\mid
 w(\BAR\Lambda'+\bar \rho))}{k+h^\vee}}\right),
\end{equation}
where $\Lambda,\Lambda'$ runs through $P_+^k$, $(\cdot\mid\cdot)$ is
the normalized bilinear form of chapter $6$ of \cite{vK}, $\BAR{\rho} = \sum_{j=1}^\ell \BAR{\Lambda_j}$ and $W^\circ$ is the Weyl group of $\frak g^\circ$.
The constant $\cnst$ is a normalization factor such that $\BAR{S}^T S=\id$
and such that the first column and row have positive real numbers as entries.

A Kac-Peterson matrix of type $X_\ell^{(1)}$ and level $k$ defines a fusion
algebra which we denote by $X_{\ell,k}^{(1)}$. A classification
of all these fusion algebras up to isomorphism was given by Gannon in \cite{tG}.

\section{The structure of $P_+^k$ for type $D$}

Using $\Lambda_i=\BAR{\Lambda_i}+a^\vee_i\Lambda_0$, we obtain
\[ P_+^k = \Big\{ k\Lambda_0 + \sum_{j=1}^\ell \lambda_j \BAR{\Lambda_j}
\mid \lambda_j\in \ZZ_{\ge 0}, \:\: \sum_{j=1}^\ell a_j^\vee \lambda_j\le k\Big\}. \]
In the Kac-Peterson formula the $\Lambda_0$ component is not used, so
we will ignore this component in the rest of the paper.
Moreover, in (\ref{kpmat}) we always add $\BAR\rho$ to the weights.
We will therefore identify the indices for $S$ with the set
\[ P^k_\rho:= \Big\{ \BAR\rho+ \sum_{j=1}^\ell \lambda_j \BAR{\Lambda_j}
\mid \lambda_j\in \ZZ_{\ge 0}, \:\: \sum_{j=1}^\ell a_j^\vee \lambda_j\le k\Big\}. \]

We now specialize to a type $D$ affine Lie algebra.  Let $\ell \ge 3$.  (Even though $D_3 \cong A_3$ the construction below still works.)  First we fix a basis $\{v_i\ |\ 1 \leq i \leq \ell \}$ of ${\frak h}^{\circ*}$,
which is orthonormal with respect to $(\cdot\mid\cdot)$ and such that
\[ \alpha_i = \begin{cases} v_i - v_{i+1} & 1 \leq i \leq \ell-1 \\
v_{\ell-1} + v_\ell & i = \ell \end{cases} \]
is the usual root system (see \cite[6.7]{vK}). 
The fundamental weights $\BAR{\Lambda_j}$ are dual to the $\alpha_i^\vee$ for $1 \leq i,j \leq \ell$. Explicitly, then,
\[ \BAR{\Lambda_j} = \begin{cases} v_1 + v_2 + \ldots + v_j & 1 \leq j \leq \ell-2 \\
\frac12(v_1+v_2 + \ldots +v_{\ell-1} - v_\ell) & j=\ell-1 \\
\frac12(v_1+v_2 + \ldots +v_{\ell-1} + v_\ell) & j=\ell. \end{cases} \]
From the kernel of the Cartan matrix of $D^{(1)}_\ell$ come the numbers 
\[ a_i^\vee = \begin{cases} 2 & 2 \leq i \leq \ell-2 \\
1 & i=1,\ell-1,\ell. \end{cases} \]
Further, $\overline{\rho} = (\ell-1)v_1 + (\ell-2)v_2 + \ldots + v_{\ell-1}$.  

The following example contains notation used throughout the rest of the paper.

\begin{examp} \textbf{$D^{(1)}_{\ell,2}$} \label{exD2}

\textbf{Warning:} at this point, and in section \ref{D2}, we write the elements of $P_\rho^2$ in reverse order with respect to the basis $\{v_i\ |\ 1 \leq i \leq \ell\}$, which will simplify the calculations later.  Thus, $\overline{\rho} = v_{\ell-1} + 2v_{\ell-2} + \ldots + (\ell-2)v_2 + (\ell-1)v_1 = (0,1,2, \ldots, \ell-2, \ell-1)$, etc.  We begin with $\ell$ vectors that are of the form $(0,1,2,\ldots,j-1,j+1, \ldots, \ell-1, \ell)$ with $j \neq 0$ and then write down the last seven elements of $P^2_\rho$.
\[
\begin{array}{rclcl}
\overline{\rho} & = & \left(0,1,2, \ldots, \ell-2, \ell-1\right) &=:& \nu_0 \\
\overline{\rho} + \overline{\Lambda_1} & = & \left(0,1,2, \ldots, \ell-3, \ell-2, \ell\right) &=:& \nu_1\\
\vdots && \hspace{2cm}\vdots && \vdots \\
\overline{\rho} + \overline{\Lambda_{\ell-1}} + \overline{\Lambda_\ell} & = & \left(0,2,3, \ldots, \ell-1, \ell\right) &=:& \nu_{\ell-1} \\
\overline{\rho} + 2\overline{\Lambda_1} & = & \left(0,1,2, \ldots, \ell-2, \ell+1\right) &=:& \nu'_0\\
\overline{\rho} + 2\overline{\Lambda_{\ell-1}} & = & \left(-1,2,3, \ldots, \ell-1, \ell\right) &=:& \nu'_\ell  \\
\overline{\rho} + 2\overline{\Lambda_\ell} & = & \left(1,2,3, \ldots, \ell-1, \ell\right) &=:& \nu_\ell\\
\overline{\rho} + \overline{\Lambda_\ell} & = & \frac12 \left(1, 3, 5, \ldots, 2\ell-3, 2\ell-1\right) &=:& \mu_0 \\
\overline{\rho} + \overline{\Lambda_{\ell-1}} & = & \frac12 \left(-1, 3, 5, \ldots, 2\ell-3, 2\ell-1\right) &=:& \mu_1  \\
\overline{\rho} + \overline{\Lambda_1} + \overline{\Lambda_\ell} & = & \frac12 \left(1, 3, 5, \ldots, 2\ell-3, 2\ell+1\right) &=:& \mu_2  \\
\overline{\rho} + \overline{\Lambda_1} + \overline{\Lambda_{\ell-1}} & = & \frac12 \left(-1, 3, 5, \ldots, 2\ell-3, 2\ell+1\right) &=:& \mu_3
\end{array}\]
Note that there are $\ell+7$ irreducible representations for $D^{(1)}_{\ell,2}$.  They are of two types, which we denote $Z$ and $H$ according to whether their entries are in $\ZZ$ or $\frac12\ZZ \setminus \ZZ$, respectively. So 
\[ Z = \{\nu'_0, \nu'_\ell, \nu_i\ |\ 0 \leq i \leq \ell \}\ \text{and}\ H = \{\mu_i\ |\ 0 \leq i \leq 3\}. \]
\end{examp}

\section{Exterior powers and type $D$}\label{extpow}

There is an obvious similarity between the formula for the
Kac-Peterson matrix and a determinant. Indeed, for example
the matrices of type $C$ may be viewed as exterior powers of
matrices of type $A_1$ (see \cite{mC1}). In this article we use an
analogous observation for type $D$.

Let $\ell\in\NN$, $e:=k+h^\vee$ and $\zeta:=\zeta_e$.
For any $\lambda,\mu \in P$, let $R^{\lambda,\mu}$ be the matrix
\[ R^{\lambda,\mu}_{i,j} := \zeta^{\lambda_i\mu_j}+\zeta^{-\lambda_i\mu_j}. \]
where $\lambda = \sum_{i=1}^\ell \lambda_i v_i$, $\mu = \sum_{i=1}^\ell \mu_i v_i$.

Since we use the basis of $v_i$'s, the Weyl group of type
$D$ is the group of monomial matrices with $\pm 1$ as non-zero entries
and with an even number of $-1$'s (see \cite[6.7]{vK}).
Throughout all proofs, we will denote by $\Xi\subset\{\pm 1\}^\ell$ the set of
vectors with entries $\pm 1$ and an even number of $-1$'s.

\begin{propo} \label{S=detR}
Let $S$ be the Kac-Peterson matrix of type $D$, level $k$ and
$\lambda,\mu \in P_\rho^k$ with $\lambda_i,\mu_i\in\ZZ$
for all $1\le i\le \ell$. Then
\[ S_{\lambda,\mu} = \frac{\cnst}{2} \det R^{\lambda, \mu} \]
if $e \mid 2\lambda_i$ or $e \mid 2\mu_i$ for some $i$.
\end{propo}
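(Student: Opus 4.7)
The plan is to expand the Kac-Peterson sum (\ref{kpmat}) along the semidirect decomposition $W^\circ = S_\ell\ltimes\Xi$, factor the summand into a permutation piece and a sign piece, and use the divisibility hypothesis to annihilate the ``odd'' half of the sign sum. What remains is visibly the Leibniz expansion of $\det R^{\lambda,\mu}$.

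Concretely, I would write each $w\in W^\circ$ as $(\sigma,\epsilon)$ with $\sigma\in S_\ell$ a permutation and $\epsilon\in\Xi$ the sign vector, so that $\det(w)=\operatorname{sgn}(\sigma)\prod_i\epsilon_i=\operatorname{sgn}(\sigma)$ (the product is $1$ on $\Xi$). After a harmless relabeling of $\epsilon$ absorbing $\sigma$, the pairing becomes $(\lambda\mid w(\mu))=\sum_j\epsilon_j\lambda_j\mu_{\sigma^{-1}(j)}$, and with $\zeta:=\zeta_e$ the formula takes the shape
\[ S_{\lambda,\mu}=\cnst\sum_{\sigma\in S_\ell}\operatorname{sgn}(\sigma)\sum_{\epsilon\in\Xi}\prod_{j=1}^\ell\zeta^{-\epsilon_jb_j},\qquad b_j:=\lambda_j\mu_{\sigma^{-1}(j)}. \]

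For each fixed $\sigma$, the inner sum is the ``even-parity'' half of a sum over all $\{\pm1\}^\ell$, so by the standard parity identity
\[ \sum_{\epsilon\in\Xi}\prod_j\zeta^{-\epsilon_jb_j}=\tfrac12\Bigl(\prod_j(\zeta^{b_j}+\zeta^{-b_j})+\prod_j(\zeta^{-b_j}-\zeta^{b_j})\Bigr). \]
The decisive observation is that the second product vanishes under the hypothesis: if $e\mid 2\lambda_i$, take $j=i$, so that $2b_j=2\lambda_i\mu_{\sigma^{-1}(i)}\in e\ZZ$ because $\mu_{\sigma^{-1}(i)}\in\ZZ$; if instead $e\mid 2\mu_i$, take $j=\sigma(i)$, so that $2b_j=\lambda_j\cdot 2\mu_i\in e\ZZ$ because $\lambda_j\in\ZZ$. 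Either way $\zeta^{b_j}=\zeta^{-b_j}$ and the factor $\zeta^{-b_j}-\zeta^{b_j}$ is zero, for every $\sigma$.

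With the odd piece gone, the surviving contribution is $\tfrac12\prod_j R^{\lambda,\mu}_{j,\sigma^{-1}(j)}$; substituting $\tau=\sigma^{-1}$ (with $\operatorname{sgn}(\tau)=\operatorname{sgn}(\sigma)$) reassembles the Leibniz formula to give $\frac{\cnst}{2}\det R^{\lambda,\mu}$. The main conceptual step is the vanishing argument above; the remaining work is bookkeeping around the semidirect-product structure of $W^\circ$ and the index matching needed so that the permutation pattern of $R^{\lambda,\mu}$ appears with the correct sign.
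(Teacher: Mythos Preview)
Your proof is correct and follows essentially the same approach as the paper: expand the Weyl sum over $S_\ell\ltimes\Xi$, then use the divisibility hypothesis to pass from the parity-restricted sum over $\Xi$ to the full sum over $\{\pm1\}^\ell$, at which point the Leibniz expansion of $\det R^{\lambda,\mu}$ appears. The only cosmetic difference is that you invoke the parity identity $\sum_{\epsilon\in\Xi}=\tfrac12(\text{even part}+\text{odd part})$ and kill the odd product via one vanishing factor, whereas the paper observes directly that the sign coordinate attached to the divisible index is irrelevant and so double-counts; these are two phrasings of the same observation.
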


\begin{proof}
By equation (\ref{kpmat}),
\[ S_{\lambda,\mu}=\cnst \sum_{\sigma\in S_\ell}\sum_{f\in \Xi}
\varepsilon_\sigma \exp\left({-\frac{2\pi \ii (\lambda \mid
 \sigma(\mu)^f)}{k+h^\vee}}\right) \]
where $\varepsilon_\sigma$ is the sign of the permutation $\sigma$
and $\mu^f=(f_1\mu_1,\ldots,f_\ell\mu_\ell)$.
Assume one of the above conditions is
satisfied, say without loss of generality that $e \mid 2\lambda_1$. We obtain
\begin{eqnarray*}
\frac{1}{\cnst}S_{\lambda,\mu} & = &
\sum_{\sigma\in S_\ell}\sum_{f\in\Xi}
\varepsilon_\sigma \zeta^{-(\lambda \mid \sigma(\mu)^f)} \\
& = & \sum_{f\in\Xi} \sum_{\sigma\in S_\ell}
\varepsilon_\sigma \prod_{i=1}^{\ell} \zeta^{-f_i \lambda_i \mu_{\sigma(i)}} \\
& = & \sum_{f\in\Xi} \sum_{\sigma\in S_\ell}
\varepsilon_\sigma (\zeta^{\lambda_1})^{(-f_1) \mu_{\sigma(1)}}
\prod_{i=2}^{\ell} \zeta^{-f_i \lambda_i \mu_{\sigma(i)}}.
\end{eqnarray*}
Since $\zeta^{\lambda_1} = \pm 1$ and $\mu_j \in \ZZ$, it is irrelevant whether $f_1 = 1$ or $-1$.  Hence, the last expression becomes (notice the $\frac12$ factor and the new summation index)
\begin{eqnarray*}
& &\frac{1}{2} \sum_{f\in\{\pm 1\}^\ell} \sum_{\sigma\in S_\ell}
\varepsilon_\sigma (\zeta^{\lambda_1})^{(-f_1) \mu_{\sigma(1)}}
\prod_{i=2}^{\ell} \zeta^{-f_i \lambda_i \mu_{\sigma(i)}} \\
& = &\frac{1}{2} \sum_{\sigma\in S_\ell} \varepsilon_\sigma
\sum_{f\in\{\pm 1\}^\ell}
\prod_{i=1}^{\ell} \zeta^{-f_i \lambda_i \mu_{\sigma(i)}} \\
& = &\frac{1}{2} \sum_{\sigma\in S_\ell} \varepsilon_\sigma
\prod_{i=1}^{\ell} (\zeta^{\lambda_i \mu_{\sigma(i)}}+
\zeta^{-\lambda_i \mu_{\sigma(i)}}) \\
& = &\frac{1}{2} \det R^{\lambda, \mu}.
\end{eqnarray*}
\end{proof}

\begin{remar} \label{S=detR2}
Proposition \ref{S=detR} holds more generally.  For instance, if $\lambda_i = 0$ for some $i$ and if $\mu_j \in \frac{1}{n}\ZZ$ for all $j$, then an analogous calculation is still valid, only with $\zeta_{ne}^n$ in place of $\zeta_e$ to guarantee integral exponents.
\end{remar}

\section{Type $D$ level $2$: The $S$-matrix}\label{D2}

From now on, we concentrate on the case of type $D$ and level $2$.  (See Example \ref{exD2}.) Note $k+h^\vee=2\ell$. 
Most of the labels for $P_\rho^2$ are of the form
$\lambda = \nu_{i}$ for some $i$.
In this notation, $\overline{\rho} = \nu_0$ is the unit of the fusion algebra.
Further, the constant in (\ref{kpmat}) is
\[ \cnst = \ii^{|\Delta_+^\circ|}|\Gamma^*/(k+h^\vee)\Gamma|^{-\frac12} \]
where $\Gamma$ is the lattice defined in \cite[6.5]{vK} (called $M$ there) and $\Delta_+^\circ$ is the set of positive roots.
Since
$|\Delta_+^\circ|=\ell^2-\ell$ for type $D_\ell$ (see for example \cite[12.2]{jH}),
$\cnst$ is a real number and its sign is $(-1)^{\binom{\ell}{2}}$.
By \cite[13.8.10]{vK}, all entries in the first column and first row of $S$
are positive real numbers: $S_{\nu_0,\lambda}=S_{\lambda,\nu_0}>0$ for all $\lambda$.
We will use this fact several times to determine needed signs.
We will also frequently use
$(-1)^{\binom{\ell}{2}}=(-1)^{\lfloor\frac{\ell}{2}\rfloor}$.

Most of the section is devoted to the calculation of $S_{\lambda,\mu}$ in all instances, but broadly, we devote a subsection to each case: $\lambda, \mu \in Z$; $\lambda \in Z, \mu \in H$; and $\lambda, \mu \in H$. 
In the last subsection we summarize the complete $s$-matrix, the $S$-matrix and the $T$-matrix.

\subsection{$\lambda, \mu \in Z$}
Note that Proposition~\ref{S=detR} applies here.
Hence we already have information regarding the large part of the $S$-matrix (with size depending
on $\ell$). 
\begin{propo}  Let $0 \le i,j \le \ell$.  Then
\[ S_{\nu_i,\nu_j} = \cnst (-1)^{\binom{\ell}{2}}
2\sqrt{2^{\ell+1}\ell^{\ell+1}} \rho_{i,j} c(ij). \]
\end{propo}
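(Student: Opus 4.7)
The plan is to apply Proposition~\ref{S=detR}, which reduces $S_{\nu_i,\nu_j}$ to the determinant of an $\ell\times\ell$ matrix of cosines whenever some coordinate is divisible by $\ell$ (recall $e=2\ell$). Unpacking Example~\ref{exD2}, the multiset of $v$-coefficients of $\nu_i$ is exactly $\{0,1,\ldots,\ell\}\setminus\{\ell-i\}$, so each $\nu_i$ has a coordinate in $\{0,\ell\}$ and the hypothesis is satisfied for every pair with $0\le i,j\le\ell$. Hence $S_{\nu_i,\nu_j}=\frac{\cnst}{2}\det R^{\nu_i,\nu_j}$.

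Next I would identify $\det R^{\nu_i,\nu_j}$ with a minor of the matrix $M$ from (\ref{mmat}). Written out in the natural order $a=1,\ldots,\ell$, the coefficients of $\nu_i$ form a strictly decreasing sequence, so the row- and column-sorting permutations are both the reverse permutation on $\{1,\ldots,\ell\}$ and their signs cancel. After sorting, one obtains the submatrix of $M$ with row $\ell-i$ and column $\ell-j$ removed. Using $N=M^{-1}$ with $N_{a,b}=\rho_{a,b}c(ab)$, the cofactor formula together with $\rho_{\ell-a}=\rho_a$ yields
\[ \det R^{\nu_i,\nu_j}=(-1)^{i+j}\,\rho_{i,j}\,c\bigl((\ell-i)(\ell-j)\bigr)\,\det M. \]
The trivial identity $c(\ell m+j)=(-1)^m c(j)$ then simplifies $c((\ell-i)(\ell-j))=(-1)^{\ell+i+j}c(ij)$, and combining signs leaves
\[ \det R^{\nu_i,\nu_j}=(-1)^{\ell}\,\rho_{i,j}\,c(ij)\,\det M. \]

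From (\ref{mmat}) we have $|\det M|=4\sqrt{2^{\ell+1}\ell^{\ell+1}}$, but the sign of $\det M$ is not directly pinned down there. I would fix it by invoking the known positivity $S_{\nu_0,\nu_0}>0$ together with $\mathrm{sgn}(\cnst)=(-1)^{\binom{\ell}{2}}$: these conditions force $(-1)^{\ell}\cdot\mathrm{sgn}(\det M)=(-1)^{\binom{\ell}{2}}$, and the formula then assembles to the claimed expression.

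The main obstacle is pure sign bookkeeping: the cofactor sign $(-1)^{i+j}$, the parity $(-1)^{\ell+i+j}$ from reducing $c((\ell-i)(\ell-j))$ modulo $2\ell$, and the ambiguous sign of $\det M$ from (\ref{mmat}) must all combine cleanly to produce the single factor $(-1)^{\binom{\ell}{2}}$ in the final answer. Once these signs are correctly coordinated, the whole computation is essentially an application of the $M$--$N$ duality established in the previous section.
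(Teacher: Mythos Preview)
Your proposal is correct and follows essentially the same route as the paper: apply Proposition~\ref{S=detR}, recognize $\det R^{\nu_i,\nu_j}$ as the $(\ell-i,\ell-j)$ minor of $M$, read off that minor via $N=M^{-1}$ and the cofactor formula, simplify $c((\ell-i)(\ell-j))$ to $(-1)^{\ell+i+j}c(ij)$, and finally pin down the sign of $\det M$ using $S_{\nu_0,\nu_0}>0$ together with $\mathrm{sgn}(\cnst)=(-1)^{\binom{\ell}{2}}$. The only difference is that you are more explicit about the row/column reversal (the coordinates of $\nu_i$ with respect to $v_1,\ldots,v_\ell$ are decreasing, so matching $R^{\nu_i,\nu_j}$ with the standard minor of $M$ requires reversing both rows and columns, whose signs indeed cancel); the paper simply asserts the identification without comment.
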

\begin{proof}
Remember from (\ref{mmat}) the matrix $M=(c(ij))_{0\le i,j\le \ell}$ and its inverse.
Notice that $R^{\nu_i,\nu_j}$ is the submatrix of $M$ where
we remove the $(\ell-i)$-th row and $(\ell-j)$-th column. Hence, for all $i$ and $j$,
\[ \rho_{\ell-j,\ell-i} c((\ell-j)(\ell-i)) = (M^{-1})_{\ell-j,\ell-i}
= (-1)^{i+j} \frac{\det R^{\nu_i,\nu_j}}{\det M}. \]
So, we have 
\bea
S_{\nu_i,\nu_j} = \frac{\cnst}{2} \det R^{\nu_i,\nu_j} & = & \frac{\cnst}{2}(-1)^{i+j}(\det M)\rho_{\ell-j,\ell-i}c((\ell-j)(\ell-i)) \\
& = & \frac{\cnst}{2}(-1)^{i+j}(\det M)\rho_{i,j}c(\ell(\ell-j-i)+ij) \\
& = & \frac{\cnst}{2}(-1)^{\ell}(\det M)\rho_{i,j}c(ij). \\
\eea
Specifically, we have $0<S_{\nu_0,\nu_0} = \frac{(-1)^\ell \cnst}{8\ell} (\det M)$.
Hence the sign of $\det M$ is
$(-1)^{\binom{\ell}{2}+\ell}$
and by (\ref{mmat}), $\det M=(-1)^{\binom{\ell}{2}+\ell} 4\sqrt{2^{\ell+1}\ell^{\ell+1}}$,
and so the proposition holds.
\end{proof}

The corresponding entry in the $s$-matrix follows directly:
\[
s_{\nu_i,\nu_j} = \frac{S_{\nu_i,\nu_j}}{S_{\nu_i, \nu_0}} = \frac{\rho_{i,j}c(ij)}{\rho_{i,0}c(0)}  = \rho_j c(ij).
\]

For the columns and rows indexed by $\nu'_0$ and $\nu'_\ell$, we use:
\begin{lemma}
For all $\lambda \in Z$, we have $R^{\nu'_0,\lambda} = R^{\nu_0,\lambda}$ and $R^{\nu'_\ell,\lambda} = R^{\nu_\ell,\lambda}$.
\end{lemma}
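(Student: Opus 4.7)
The plan is to proceed directly from the definition of $R^{\lambda,\mu}$, exploiting the symmetry $\zeta^{a}+\zeta^{-a}=\zeta^{-a}+\zeta^{a}$ together with the fact that, at level $2$, $\zeta=\zeta_{2\ell}$ (since $e=k+h^\vee=2\ell$). Hence any exponent of the form $(a+2\ell n)\lambda_j$ with $\lambda_j\in\ZZ$ and $n\in\ZZ$ equals $\zeta^{a\lambda_j}$.

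First I would compare the relevant weights coordinate-by-coordinate using Example~\ref{exD2}. The vectors $\nu_0$ and $\nu'_0$ agree in every entry except the last, where $(\nu_0)_\ell=\ell-1$ and $(\nu'_0)_\ell=\ell+1$; similarly $\nu_\ell$ and $\nu'_\ell$ agree in every entry except the first, where $(\nu_\ell)_1=1$ and $(\nu'_\ell)_1=-1$. In both cases the exceptional coordinates satisfy $(\nu')_i\equiv-(\nu)_i\pmod{2\ell}$: this is immediate for $\nu_\ell$ versus $\nu'_\ell$, and follows from $(\ell+1)+(\ell-1)=2\ell$ for $\nu_0$ versus $\nu'_0$. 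For every row $i$ on which the two weights already coincide, the corresponding rows of $R^{\nu',\lambda}$ and $R^{\nu,\lambda}$ are identical by definition.

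It then suffices to treat the single exceptional row. The hypothesis $\lambda\in Z$ forces every $\lambda_j\in\ZZ$, so for $\nu'_0$ versus $\nu_0$ one computes
\[ R^{\nu'_0,\lambda}_{\ell,j}=\zeta^{(\ell+1)\lambda_j}+\zeta^{-(\ell+1)\lambda_j}=\zeta^{-(\ell-1)\lambda_j}+\zeta^{(\ell-1)\lambda_j}=R^{\nu_0,\lambda}_{\ell,j}, \]
using $\zeta^{2\ell\lambda_j}=1$. For $\nu'_\ell$ versus $\nu_\ell$ the equality
\[ R^{\nu'_\ell,\lambda}_{1,j}=\zeta^{-\lambda_j}+\zeta^{\lambda_j}=R^{\nu_\ell,\lambda}_{1,j} \]
is immediate from the defining symmetry of $R$.

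The only potential obstacle is bookkeeping: one must verify carefully that every non-exceptional coordinate in each pair truly agrees, which is a direct read-off from the explicit lists in Example~\ref{exD2}. No further computation is required, and in particular the hypothesis $\lambda\in Z$ is used only to guarantee integrality of the exponents so that the reduction modulo $2\ell$ is legal.
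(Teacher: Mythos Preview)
Your proof is correct and follows essentially the same approach as the paper's own proof: both identify the single coordinate in which each pair $(\nu_0,\nu'_0)$ and $(\nu_\ell,\nu'_\ell)$ differ, then use integrality of $\lambda_j$ to reduce $(\ell+1)\lambda_j$ to $-(\ell-1)\lambda_j$ modulo $2\ell$ and the evenness of $a\mapsto \zeta^a+\zeta^{-a}$ to handle the sign flip. The paper phrases the computation in terms of the cosine shorthand $c(\cdot)$ rather than $\zeta$, but the argument is identical.
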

\begin{proof}  Notice that $\nu_0$ and $\nu'_0$ differ only in the $\ell$-th coordinate, while $\nu_\ell$ and $\nu'_\ell$ differ only in the first coordinate.  So $R^{\nu'_0,\lambda}$ and $R^{\nu_0,\lambda}$ possibly differ only in the $\ell$-th row, $R^{\nu'_\ell,\lambda}$ and $R^{\nu_\ell,\lambda}$ possibly only in the first.  But since $\lambda_j \in \ZZ$ for all $j$, we get $R^{\nu'_0,\lambda}_{\ell,j} = c((\ell+1)\lambda_j) = c((\ell-1)\lambda_j) = R^{\nu_0,\lambda}_{\ell,j}$.  Also, $R^{\nu'_\ell,\lambda}_{1,j} = R^{\nu_\ell,\lambda}_{1,j}$ because cosine is an even function.
\end{proof}

\subsection{$\lambda\in Z, \mu \in H$}
Many of the entries of the $S$-matrix for $\lambda\in Z$, $\mu \in H$ are
still given by determinants.  (See Remark~\ref{S=detR2}.) 
Notice that when $\lambda_\ell = \ell$ (that is, for all $\lambda \in Z \setminus \{\nu_0,\nu'_0\}$), then $\det R^{\lambda,\mu} = 0$ for all $\mu \in H$ because $R^{\lambda,\mu}_{\ell,j} = c(\ell \mu_j) = 0$ for all $j$.
One immediate consequence of this paragraph is that, for all $\mu \in H$,
\[
S_{\nu_i,\mu} = 0\ \ \text{for}\ \ 1 \leq i \leq \ell-1.
\]
This is not true for $S_{\nu_\ell,\mu}$ or $S_{\nu'_\ell,\mu}$ because
Remark~\ref{S=detR2} does not apply, but we have:

\begin{lemma} For all $\mu \in H$, $S_{\nu_\ell,\mu} = - S_{\nu'_\ell,\mu}$.
\end{lemma}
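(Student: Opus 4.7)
The plan is to compute $S_{\nu_\ell,\mu}+S_{\nu'_\ell,\mu}$ directly from the Weyl-group expansion and show it reduces to a determinant that vanishes. The essential point is that $\nu_\ell=(1,2,\ldots,\ell)$ and $\nu'_\ell=(-1,2,\ldots,\ell)$ agree outside the first coordinate, where they differ only by a sign.

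Following the derivation in the proof of Proposition~\ref{S=detR} (but interpreting the root of unity as in Remark~\ref{S=detR2} to accommodate the half-integral entries of $\mu\in H$), the sum $S_{\nu_\ell,\mu}+S_{\nu'_\ell,\mu}$ equals
\[ \cnst\sum_{\sigma\in S_\ell}\sum_{f\in\Xi}\varepsilon_\sigma\Bigl(\zeta^{-f_1\mu_{\sigma(1)}}+\zeta^{f_1\mu_{\sigma(1)}}\Bigr)\prod_{i=2}^\ell\zeta^{-f_i\,i\,\mu_{\sigma(i)}}. \]
The bracketed $i=1$ factor simplifies to $\zeta^{\mu_{\sigma(1)}}+\zeta^{-\mu_{\sigma(1)}}$, which is independent of $f_1$; hence for each $(f_2,\ldots,f_\ell)\in\{\pm 1\}^{\ell-1}$ there is a unique $f_1$ completing an element of $\Xi$, and the constraint defining $\Xi$ can effectively be dropped. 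Distributing the $f_i\in\{\pm 1\}$ sums for $i\ge 2$ into the product converts the result into
\[ \cnst\sum_{\sigma\in S_\ell}\varepsilon_\sigma\prod_{i=1}^\ell\bigl(\zeta^{i\mu_{\sigma(i)}}+\zeta^{-i\mu_{\sigma(i)}}\bigr)\;=\;\cnst\det A, \]
where $A_{i,j}=\zeta^{i\mu_j}+\zeta^{-i\mu_j}$.

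The conclusion is then immediate: the $\ell$-th row of $A$ has entries $A_{\ell,j}=2\cos(\pi\mu_j)$, and since $\mu\in H$ forces each $\mu_j$ to be half an odd integer, every such entry vanishes. Thus $\det A=0$, which gives $S_{\nu_\ell,\mu}=-S_{\nu'_\ell,\mu}$. The main subtlety is the bookkeeping step that dropping the even-sign-flip constraint on $\Xi$ is legitimate only after the $i=1$ factor is symmetrized; once that is arranged, the determinant interpretation and its vanishing are transparent.
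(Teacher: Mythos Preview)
Your proof is correct and follows essentially the same approach as the paper's own argument: both compute $S_{\nu_\ell,\mu}+S_{\nu'_\ell,\mu}$ via the Weyl-group expansion, observe that the symmetrized $i=1$ factor renders the $\Xi$-constraint irrelevant, and then identify the result with $\det R^{\nu_\ell,\mu}$ (your matrix $A$), which vanishes because its $\ell$-th row has entries $c(\ell\mu_j)=0$ for $\mu\in H$. The only cosmetic difference is that the paper works with $\omega=\zeta_{4\ell}$ and integer exponents $2\mu_j$ rather than invoking Remark~\ref{S=detR2}.
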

\begin{proof}  Define $\omega := \zeta_{4\ell}$.
\bea
\frac{1}{\cnst} S_{\nu_\ell,\mu} + \frac{1}{\cnst}S_{\nu'_\ell,\mu} & = & \sum_{f\in\Xi} \sum_{\sigma\in S_\ell}
\varepsilon_\sigma \left[ \prod_{i=1}^{\ell} \omega^{-f_i (\nu_\ell)_i 2\mu_{\sigma(i)}} + \prod_{i=1}^{\ell} \omega^{-f_i (\nu'_\ell)_i 2\mu_{\sigma(i)}} \right] \\
& = &\sum_{f\in\Xi} \sum_{\sigma\in S_\ell}
\varepsilon_\sigma \left[\omega^{-f_1 2\mu_{\sigma(1)}} + \omega^{+f_1 2\mu_{\sigma(1)}} \right]\prod_{i=2}^{\ell} \omega^{-f_i (\nu_\ell)_i 2\mu_{\sigma(i)}} \\
& = &\sum_{f\in \{\pm1\}^\ell} \sum_{\sigma\in S_\ell}
\varepsilon_\sigma \left[\omega^{-f_1 2\mu_{\sigma(1)}} \right]\prod_{i=2}^{\ell} \omega^{-f_i (\nu_\ell)_i 2\mu_{\sigma(i)}} \\
& = & \det R^{\nu_\ell,\mu} = 0.
\eea
\end{proof}

Moreover, we have:
\begin{lemma} For all $\mu \in H$, $S_{\nu_0,\mu} = -S_{\nu'_0,\mu}$.
\end{lemma}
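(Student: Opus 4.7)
The plan is to mimic the previous lemma, namely to show $S_{\nu_0,\mu}+S_{\nu'_0,\mu}=0$ by exploiting the fact that $\nu_0$ and $\nu'_0$ differ only in the $\ell$-th coordinate, where $(\nu_0)_\ell=\ell-1$ and $(\nu'_0)_\ell=\ell+1$. Set $\omega:=\zeta_{4\ell}$, which is the appropriate root of unity since $\mu\in H$ has entries in $\tfrac{1}{2}\ZZ\setminus\ZZ$ so that $2\mu_j$ is an odd integer for every $j$.

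First I would write, via the Kac--Peterson formula (\ref{kpmat}),
\[
\frac{1}{\cnst}(S_{\nu_0,\mu}+S_{\nu'_0,\mu})=\sum_{f\in\Xi}\sum_{\sigma\in S_\ell}\varepsilon_\sigma
\Bigl(\prod_{i=1}^{\ell-1}\omega^{-f_i(\nu_0)_i\,2\mu_{\sigma(i)}}\Bigr)
\bigl[\omega^{-f_\ell(\ell-1)a}+\omega^{-f_\ell(\ell+1)a}\bigr],
\]
where $a:=2\mu_{\sigma(\ell)}$ is odd. The next step is to simplify the bracket. Since $\omega^{2\ell}=-1$ and $a$ is odd, $\omega^{-f_\ell(\ell+1)a}=\omega^{-f_\ell 2\ell a}\omega^{f_\ell(\ell-1)a}=-\omega^{f_\ell(\ell-1)a}$, so the bracket equals $\omega^{-f_\ell(\ell-1)a}-\omega^{f_\ell(\ell-1)a}=f_\ell\bigl[\omega^{-(\ell-1)a}-\omega^{(\ell-1)a}\bigr]$; in particular, it is antisymmetric in $f_\ell$.

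Now I would use that $f\in\Xi$ forces $f_\ell=\prod_{i=1}^{\ell-1}f_i$, so the sum over $\Xi$ becomes a sum over $(f_1,\dots,f_{\ell-1})\in\{\pm1\}^{\ell-1}$ with an extra factor $\prod_{i=1}^{\ell-1}f_i$ pulled in from $f_\ell$. After pulling the $\mu_{\sigma(\ell)}$-factor out of the $f$-sum, one arrives at
\[
\frac{1}{\cnst}(S_{\nu_0,\mu}+S_{\nu'_0,\mu})=\sum_{\sigma}\varepsilon_\sigma\bigl[\omega^{-(\ell-1)2\mu_{\sigma(\ell)}}-\omega^{(\ell-1)2\mu_{\sigma(\ell)}}\bigr]\prod_{i=1}^{\ell-1}\bigl(\omega^{-(\nu_0)_i 2\mu_{\sigma(i)}}-\omega^{(\nu_0)_i 2\mu_{\sigma(i)}}\bigr),
\]
where the product separated because the $f_i$'s are independent after the sum is extended to $\{\pm1\}^{\ell-1}$. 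Since $(\nu_0)_1=0$, the $i=1$ factor in the product is $\omega^0-\omega^0=0$, so every summand vanishes and the total is $0$.

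The main obstacle is bookkeeping: keeping straight which $f$-variables the bracket depends on, verifying the antisymmetry carefully (this requires the fact that $a$ is odd, which is exactly the condition $\mu\in H$), and converting the sum over $\Xi$ into a product of independent one-variable sums via the identity $f_\ell=\prod_{i<\ell}f_i$. The key difference from the previous lemma is that here the bracket is antisymmetric rather than symmetric in $f_\ell$, which is precisely what produces the Vandermonde-like factor $\prod_{i<\ell}\bigl(\omega^{-(\nu_0)_i 2\mu_{\sigma(i)}}-\omega^{(\nu_0)_i 2\mu_{\sigma(i)}}\bigr)$ and hence the vanishing at the first coordinate.
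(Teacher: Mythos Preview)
Your argument is correct, but it takes a longer road than the paper does. The paper observes that since $(\nu_0)_1=(\nu'_0)_1=0$, Remark~\ref{S=detR2} applies to \emph{both} labels even for half-integral $\mu$, so $S_{\nu_0,\mu}=\tfrac{\cnst}{2}\det R^{\nu_0,\mu}$ and $S_{\nu'_0,\mu}=\tfrac{\cnst}{2}\det R^{\nu'_0,\mu}$ outright. These two matrices differ only in the $\ell$-th row, and for odd $x=2\mu_j$ one has $c\bigl((\ell+1)\tfrac{x}{2}\bigr)=-c\bigl((\ell-1)\tfrac{x}{2}\bigr)$, so that row is negated and the determinants differ by a sign. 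You instead mimic the \emph{previous} lemma (for $\nu_\ell,\nu'_\ell$), manipulating the Weyl sum directly; the antisymmetry in $f_\ell$ and the identity $f_\ell=\prod_{i<\ell}f_i$ on $\Xi$ give the product of differences, and the vanishing at $i=1$ is exactly what the paper's shortcut encodes via $(\nu_0)_1=0$. Your route works in both cases uniformly, whereas the paper exploits the special feature of $\nu_0,\nu'_0$ (the zero coordinate) to avoid any $\Xi$-bookkeeping; the trade-off is that the paper's three-line argument does not generalize to $\nu_\ell,\nu'_\ell$, which is why the preceding lemma there needed the hands-on computation you adapted.
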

\begin{proof}
As stated earlier, for $\lambda \in \{\nu_0,\nu'_0 \}$,
we have $\frac{1}{\cnst}S_{\lambda,\mu} = \frac12 \det R^{\lambda,\mu}$.  
Comparing the matrix entries for $R^{\nu_0,\mu}$ and $R^{\nu'_0,\mu}$, we see that the only difference occurs in the $\ell$-th row.  Namely, since $x = 2\mu_j$ is an odd number for all $j$, 
$ R^{\nu'_0,\mu}_{\ell,j} = c((\ell+1)\frac{x}{2}) = -c((\ell-1)\frac{x}{2}) = -R^{\nu_0,\mu}_{\ell,j}$.
\end{proof}

The following two lemmas give explicit values for the $S$-matrix.

\begin{lemma}\label{detomat}
Recall $\Omega$ from (\ref{Omat}).  Then
\[ S_{\nu_0,\mu} = \frac{\cnst}{2} \det \Omega =
\frac{\cnst}{2} (-1)^{\binom{\ell}{2}}\sqrt{2^{\ell+1}\ell^\ell} \]
for all $\mu \in H$.
\end{lemma}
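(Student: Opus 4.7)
The plan is to apply Remark~\ref{S=detR2} to rewrite $S_{\nu_0,\mu}$ as a determinant, identify that determinant with $\det\Omega$ for every $\mu\in H$, and finally fix the sign by using that the entries of the first row of $S$ are positive.

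First I would verify that Remark~\ref{S=detR2} applies. In the reverse-order convention of Example~\ref{exD2}, $(\nu_0)_1 = 0$, and every $\mu\in H$ has coordinates in $\tfrac12\ZZ$. Taking $n=2$ in the remark and setting $\omega := \zeta_{4\ell}$, this yields
\[ S_{\nu_0,\mu} \;=\; \tfrac{\cnst}{2}\det R^{\nu_0,\mu}, \qquad R^{\nu_0,\mu}_{i,j} = \omega^{(i-1)(2\mu_j)} + \omega^{-(i-1)(2\mu_j)}. \]

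Next I would show that $R^{\nu_0,\mu} = \Omega$ for all four $\mu\in H$. For $\mu = \mu_0$ we have $2(\mu_0)_j = 2j-1$, and the $(i,j)$ entry is exactly $c((i-1)(2j-1)/2) = \Omega_{i,j}$. Each of $\mu_1,\mu_2,\mu_3$ differs from $\mu_0$ only in coordinate $1$ and/or $\ell$. The discrepancy in coordinate $1$ (namely $2\mu_1 = -1$ instead of $+1$) is absorbed because cosine is even. The discrepancy in coordinate $\ell$ (namely $2\mu_\ell = 2\ell+1$ instead of $2\ell-1$) is absorbed via the identity $\cos(x + (i-1)\pi) = (-1)^{i-1}\cos x$, which collapses both $c((i-1)(2\ell\pm 1)/2)$ to $2(-1)^{i-1}\cos(\pi(i-1)/(2\ell))$. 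Thus $\det R^{\nu_0,\mu} = \det\Omega$ independently of which $\mu\in H$ is chosen.

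Finally I would determine the sign of $\det\Omega$. By \cite[13.8.10]{vK}, $S_{\nu_0,\mu}$ is a positive real number, and as noted at the start of Section~\ref{D2}, $\cnst$ has sign $(-1)^{\binom{\ell}{2}}$. Hence $\det\Omega$ must also have sign $(-1)^{\binom{\ell}{2}}$; combining this with the absolute value $|\det\Omega| = \sqrt{2^{\ell+1}\ell^\ell}$ supplied by (\ref{Omat}) gives the stated formula.

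The only real obstacle is the bookkeeping in the second step: one must be careful that the coordinate-$\ell$ swap $2\ell-1 \leftrightarrow 2\ell+1$ really produces the same column of $R^{\nu_0,\mu}$. Everything else is an immediate consequence of results already in place.
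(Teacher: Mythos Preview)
Your proposal is correct and follows essentially the same argument as the paper: both invoke Remark~\ref{S=detR2} (since $(\nu_0)_1=0$) to write $S_{\nu_0,\mu}=\tfrac{\cnst}{2}\det R^{\nu_0,\mu}$, identify $R^{\nu_0,\mu_0}$ with $\Omega$, reduce the other three $\mu$'s to $\mu_0$ via evenness of cosine (coordinate~$1$) and the identity $c\big((i-1)(\ell+\tfrac12)\big)=c\big((i-1)(\ell-\tfrac12)\big)$ (coordinate~$\ell$), and finally pin down the sign of $\det\Omega$ from $S_{\nu_0,\mu_0}>0$ together with the known sign of $\cnst$. The only cosmetic difference is that you compute both $c((i-1)(2\ell\pm1)/2)$ to the common value $2(-1)^{i-1}\cos(\pi(i-1)/(2\ell))$, whereas the paper asserts their equality directly.
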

\begin{proof}
Recalling the notation from Example~\ref{exD2}, we have $R^{\nu_0,\mu_0} = R^{\nu_0,\mu_1}$ and $R^{\nu_0,\mu_2} = R^{\nu_0,\mu_3}$ from the fact that cosine is even.  Next, $R^{\nu_0,\mu_2}= R^{\nu_0,\mu_0}$ because 
$ R^{\nu_0,\mu_2}_{i,\ell} = c\left((i-1)(\ell+\frac12)\right)
= c\left((i-1)(\ell-\frac12)\right) = R^{\nu_0,\mu_0}_{i,\ell}$ and all other columns are explicitly identical.  Moreover, $R^{\nu_0,\mu_0}$ is exactly $\Omega$.  
The sign of $\det\Omega$ now follows from $S_{\nu_0,\mu_0}>0$.
\end{proof}

\begin{lemma}
$S_{\nu_\ell,\mu_0} = S_{\nu_\ell,\mu_3}=-S_{\nu_\ell,\mu_1}=-S_{\nu_\ell,\mu_2}=
\cnst \ii^\ell (-1)^{\lfloor\frac{\ell}{2}\rfloor+\ell} \sqrt{2^{\ell-1}\ell^\ell}$.
\end{lemma}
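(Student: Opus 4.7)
The strategy is to evaluate the Kac-Peterson sum directly, reduce it to a determinant of the sine analogue of $R^{\nu_\ell,\mu}$, and identify this determinant with $\det\Omega$, which is already computed in Lemma~\ref{detomat}. Working with $\omega:=\zeta_{4\ell}$ and splitting the sum over $\Xi$ as $\sum_{f\in\Xi} = \tfrac12\bigl(\sum_{f\in\{\pm1\}^\ell} + \sum_{f\in\{\pm1\}^\ell}\prod_j f_j\bigr)$ (exactly as in the proofs of the preceding two lemmas), I obtain $\frac{1}{\cnst}S_{\nu_\ell,\mu}=\tfrac12\det(c(i\mu_j))_{i,j}+\tfrac12(-\ii)^\ell\det(s(i\mu_j))_{i,j}$, using $\omega^{-2x}-\omega^{2x}=-\ii\,s(x)$. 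The first determinant vanishes because $c(\ell\mu_j)=2\cos(\pi\mu_j)=0$ for $\mu\in H$ (its $\ell$-th row is identically zero). Hence
$$S_{\nu_\ell,\mu}=\frac{\cnst(-\ii)^\ell}{2}\det\Psi_\mu,\qquad (\Psi_\mu)_{i,j}:=s(i\mu_j).$$

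Next, I would compare $\Psi_{\mu_1},\Psi_{\mu_2},\Psi_{\mu_3}$ with $\Psi_{\mu_0}$ column by column. Since $s$ is odd, column $1$ of $\Psi_{\mu_1}$ is the negative of column $1$ of $\Psi_{\mu_0}$; using $\sin(\pi i+x)=(-1)^i\sin x$ for integer $i$, a direct check shows that column $\ell$ of $\Psi_{\mu_2}$ is the negative of column $\ell$ of $\Psi_{\mu_0}$. Therefore $\det\Psi_{\mu_1}=\det\Psi_{\mu_2}=-\det\Psi_{\mu_0}$ and $\det\Psi_{\mu_3}=\det\Psi_{\mu_0}$, which yields the claimed sign pattern across the four values.

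The substantial work is computing $\det\Psi_{\mu_0}$. With $\theta_j:=\pi(2j-1)/(2\ell)$, the Chebyshev identities $\sin(i\theta)=\sin\theta\cdot U_{i-1}(\cos\theta)$ and $\cos(i\theta)=T_i(\cos\theta)$ give $(\Psi_{\mu_0})_{i,j}=2\sin\theta_j\cdot U_{i-1}(\cos\theta_j)$ and $\Omega_{i,j}=2T_{i-1}(\cos\theta_j)$. Factoring $2\sin\theta_j$ from each column of $\Psi_{\mu_0}$, factoring $2$ from each row of $\Omega$, and then row-reducing both Chebyshev matrices to the Vandermonde matrix in the $\cos\theta_j$ (the leading coefficients of $U_{i-1}$ and $T_{i-1}$ are $2^{i-1}$ and $2^{i-2}$ respectively for $i\ge 2$, with $U_0=T_0=1$), the ratio $\det\Psi_{\mu_0}/\det\Omega$ simplifies to $2^{\ell-1}\prod_{j=1}^\ell\sin\theta_j$. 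The classical identity $\prod_{j=1}^\ell\sin(\pi(2j-1)/(2\ell))=2^{1-\ell}$, obtained by splitting $\prod_{k=1}^{2\ell-1}\sin(k\pi/(2\ell))=2\ell/2^{2\ell-1}$ into even and odd indices, makes this ratio equal $1$; hence $\det\Psi_{\mu_0}=\det\Omega$. Invoking Lemma~\ref{detomat} to evaluate $\det\Omega=(-1)^{\binom{\ell}{2}}\sqrt{2^{\ell+1}\ell^\ell}$ and using $(-\ii)^\ell=\ii^\ell(-1)^\ell$ together with $\sqrt{2^{\ell+1}\ell^\ell}=2\sqrt{2^{\ell-1}\ell^\ell}$ yields the claimed formula. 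The main obstacle is the Chebyshev-plus-Vandermonde reduction identifying $\det\Psi_{\mu_0}$ with $\det\Omega$; once that is in hand, the other three equalities follow from the column comparison.
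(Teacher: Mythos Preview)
Your proof is correct. The reduction to $S_{\nu_\ell,\mu}=\tfrac{\cnst}{2}(-\ii)^\ell\det(s(i\mu_j))_{i,j}$ and the column comparisons giving the sign pattern among $\mu_0,\mu_1,\mu_2,\mu_3$ match the paper's argument essentially verbatim; your $\Xi$-splitting via the indicator $\tfrac12(1+\prod_j f_j)$ is just a cleaner packaging of the paper's trick of using $\omega^{-\ell}=-\omega^{\ell}$ to pull out a factor $f_{\sigma(\ell)}$ and then pass from $\Xi$ to $\{\pm1\}^\ell$ with the weight $\prod_i f_i$. (Your parenthetical ``exactly as in the proofs of the preceding two lemmas'' is a slight overstatement---those lemmas use related but not identical manipulations---but the identity you use is standard and valid.)

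Where you genuinely diverge is in evaluating $\det\Psi_{\mu_0}$. The paper proceeds by the single trigonometric identity
\[
s\!\left(\tfrac{i(2j-1)}{2}\right)=(-1)^{j+1}\,c\!\left(\tfrac{(\ell-i)(2j-1)}{2}\right),
\]
pulls out the column signs, and reverses the row order (costing $(-1)^{\lfloor\ell/2\rfloor}$) to land directly on $\det\Omega$; a short parity check then shows all the accumulated signs cancel. Your route instead factors both $\Psi_{\mu_0}$ and $\Omega$ through Chebyshev polynomials, row-reduces each to the same Vandermonde in $\cos\theta_j$, and closes the computation with the classical product $\prod_{j=1}^{\ell}\sin\tfrac{(2j-1)\pi}{2\ell}=2^{1-\ell}$. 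Both arrive at $\det\Psi_{\mu_0}=\det\Omega$. The paper's approach is shorter and more self-contained (one identity, one row swap); yours is more structural and would generalise more readily to other families of node values, at the cost of importing the Chebyshev leading-coefficient bookkeeping and the sine-product identity.
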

\begin{proof}
As before, $\omega := \zeta_{4\ell}$.  Because $\omega^{-\ell}=-\omega^\ell$, we have:
\begin{eqnarray*}
S_{\nu_\ell,\mu} &=&
\cnst \sum_{\sigma\in S_\ell} \sum_{f\in\Xi}
\varepsilon_\sigma \prod_{i=1}^\ell \omega^{-i f_{\sigma(i)}2\mu_{\sigma(i)}} \\
 &=& \cnst \sum_{\sigma\in S_\ell} \sum_{f\in\Xi}
\varepsilon_\sigma f_{\sigma(\ell)}\omega^{-\ell 2\mu_{\sigma(\ell)}}
\prod_{i=1}^{\ell-1} \omega^{-i f_{\sigma(i)}2\mu_{\sigma(i)}}.
\end{eqnarray*}
For an $f\in\Xi$, replacing the entry $f_{\sigma(\ell)}$ by $-f_{\sigma(\ell)}$
gives an $\tilde f\in \{\pm 1\}^\ell\backslash\Xi$. Doing this for all $f\in\Xi$
we obtain exactly the set $\{\pm 1\}^\ell\backslash\Xi$.
Since $\prod_{i=1}^\ell \tilde f_i = -1$, we get (notice the $\frac{1}{2}$ factor)
\begin{eqnarray*}
S_{\nu_\ell,\mu} &=& \frac{\cnst}{2}\sum_{\sigma\in S_\ell} \sum_{f\in\{\pm 1\}^\ell}
\varepsilon_\sigma \prod_{i=1}^\ell f_i \omega^{-i f_{\sigma(i)}2\mu_{\sigma(i)}} 
\\
&=& \frac{\cnst}{2}\sum_{\sigma\in S_\ell} \varepsilon_\sigma \prod_{i=1}^\ell
(\omega^{-i 2\mu_{\sigma(i)}}-\omega^{+i 2\mu_{\sigma(i)}}) \\
&=& \frac{\cnst}{2} \det (\omega^{-i 2\mu_j}-\omega^{i 2\mu_j})_{i,j} \\
& = & \frac{\cnst}{2} (-\ii)^\ell \det (s(i\mu_j))_{i,j}.
\end{eqnarray*}
Since only the first entry of $\mu_1$ is different from $\mu_0$, and it differs only by a negative sign (and sine is odd), then $S_{\nu_\ell,\mu_1} = -S_{\nu_\ell,\mu_0}$.  Similarly, $S_{\nu_\ell,\mu_3} = -S_{\nu_\ell,\mu_2}$. Moreover, to compare $S_{\nu_\ell,\mu_2}$ and $S_{\nu_\ell,\mu_0}$, notice $s\left(\frac{i(2\ell+1)}{2}\right) =  -s\left(\frac{i(2\ell-1)}{2}\right)$,
and so $S_{\nu_\ell,\mu_2} = -S_{\mu_\ell,\mu_0}$.

To calculate $S_{\nu_\ell,\mu_0}$, notice first that
$s\left(\frac{i(2j-1)}{2}\right) =
(-1)^{j+1} c\left(\frac{(\ell-i)(2j-1)}{2}\right)$.
Swaping $m:=\lfloor \frac{\ell}{2} \rfloor$ rows, we obtain
\begin{eqnarray*}
\det \left(s \left(\frac{i (2j-1)}{2}\right)\right)_{i,j} &=&
(-1)^m \det \left((-1)^{j+1} c\left(\frac{(i-1)(2j-1)}{2}\right)\right)_{i,j} \\
&=& (-1)^{m+\binom{\ell+1}{2}+\ell} \det \Omega = \det \Omega.
\end{eqnarray*}
Finally by Lemma \ref{detomat},
\[ S_{\nu_\ell,\mu_0} = \frac{\cnst}{2} (-\ii)^\ell
\det\left(s \left(\frac{i (2j-1)}{2}\right)\right)_{i,j}
= \cnst \ii^\ell (-1)^{m+\ell} \sqrt{2^{\ell-1}\ell^\ell},
\]
and the lemma follows.
\end{proof}

\subsection{$\lambda,\mu \in H$}
The last part of the $S$-matrix cannot be obtained by Proposition~\ref{S=detR}.
\begin{propo}\label{HH}
The $4\times 4$ matrix
\[ W_\ell:=(s_{\lambda,\mu})_{\lambda,\mu \in H}=
\frac{1}{\cnst(-1)^{\binom{\ell}{2}} \sqrt{2^{\ell-1} \ell^\ell}}
(S_{\lambda,\mu})_{\lambda,\mu \in H} \]
with respect to the ordering $\{\mu_0,\mu_1,\mu_2,\mu_3\}$, is:
\begin{eqnarray*}
\ell \equiv 1 \:\:(\mbox{\rm mod } 4) : &\quad&
(\zeta_8^{ij})_{i,j\in\{7,1,5,3\}} \\
\ell \equiv 3 \:\:(\mbox{\rm mod } 4) : &\quad&
(\zeta_8^{ij})_{i,j\in\{1,7,3,5\}} \\
\ell \equiv 2 \:\:(\mbox{\rm mod } 4) : &\quad&
{\tiny \begin{pmatrix}
 0 &  \sqrt2 & -\sqrt2 &  0 \\
 \sqrt2 &  0 &  0 & -\sqrt2 \\
-\sqrt2 &  0 &  0 &  \sqrt2 \\
 0 & -\sqrt2 &  \sqrt2 &  0
\end{pmatrix}} \\
\ell \equiv 0 \:\:(\mbox{\rm mod } 4) : &\quad&
{\tiny \begin{pmatrix}
 \sqrt2 &  0 &  0 & -\sqrt2 \\
 0 &  \sqrt2 & -\sqrt2 &  0 \\
 0 & -\sqrt2 &  \sqrt2 &  0 \\
-\sqrt2 &  0 &  0 &  \sqrt2
\end{pmatrix}}
\end{eqnarray*}
\end{propo}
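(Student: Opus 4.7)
The plan is to compute each of the sixteen entries $S_{\mu_i,\mu_j}$ for $i,j\in\{0,1,2,3\}$ directly from the Kac-Peterson formula (\ref{kpmat}). I will set $\omega:=\zeta_{8\ell}$ so that, because every coordinate $(2\mu)_a$ of every $\mu\in H$ is an odd integer, each exponent $-(2\mu_i)_a(2\mu_j)_b$ is integral. Applying the now-familiar identity $\sum_{f\in\Xi}=\frac{1}{2}\sum_{f\in\{\pm 1\}^\ell}(1+\prod_a f_a)$ (exactly the trick used in the proof of the lemma for $S_{\nu_\ell,\mu_0}$) splits the Weyl-group sum into a symmetric and an antisymmetric piece, yielding
\[ \frac{S_{\mu_i,\mu_j}}{\cnst} \;=\; \tfrac12\det A^+(\mu_i,\mu_j) + \tfrac12\det A^-(\mu_i,\mu_j), \]
where $A^\pm_{ab}(\mu_i,\mu_j):=\omega^{(2\mu_i)_a(2\mu_j)_b}\pm\omega^{-(2\mu_i)_a(2\mu_j)_b}$. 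The whole matrix $W_\ell$ is thus reduced to understanding sixteen pairs of determinants.

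Next I handle the base case $(\mu_0,\mu_0)$. Here $A^+_{ab}=c((2a-1)(2b-1)/4)$ is precisely the matrix $X$ of (\ref{xmat}), so $\det A^+(\mu_0,\mu_0)=\det X$. For $A^-$ the entries are $\pm\ii\,s((2a-1)(2b-1)/4)$, so $\det A^-(\mu_0,\mu_0)=(-\ii)^\ell\det(s((2a-1)(2b-1)/4))_{a,b}$. Using $s(y)=c(\ell/2-y)$ together with the periodicity $c(j+\ell)=-c(j)$, I will show that this sine-matrix coincides with the matrix obtained from $X$ by reversing the columns and multiplying the $a$-th row by $(-1)^{a-1}$; the two $(-1)^{\binom\ell2}$ signs arising from these operations cancel, giving $\det(s(\cdot))=\det X$. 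The magnitude and sign of $\det X$ then follow from Lemma~\ref{OXmat}, Lemma~\ref{detomat}, and the positivity of $S_{\nu_0,\mu_0}$, together yielding $\det X=(-1)^{\binom\ell2}\sqrt{2^\ell\ell^\ell}$.

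The remaining fifteen entries come from tracking how two elementary modifications affect $A^\pm$: the sign flip $\mu_0\to\mu_1$ (or $\mu_2\to\mu_3$), which negates $(2\mu)_1$; and the shift $\mu_0\to\mu_2$ (or $\mu_1\to\mu_3$), which replaces $(2\mu)_\ell$ by $(2\mu)_\ell+2$. The first is elementary: cosine symmetry leaves the corresponding row or column of $A^+$ unchanged and entrywise negates that of $A^-$. The second exploits the key identity $\omega^{2\ell}=\ii$ together with $\ii^{2c-1}=(-1)^{c-1}\ii$ for odd $2c-1$, which show that the $\ell$-th row or column of $A^+$ is entrywise negated while the corresponding row or column of $A^-$ is entrywise unchanged. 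The corner entry $(\ell,\ell)$ under a simultaneous shift on both sides needs a separate check using $\omega^{4\ell}=-1$ and turns out to be consistent. Assembling all cases, $\det A^\pm(\mu_i,\mu_j)$ equals $\pm\det X$ or $\pm(-\ii)^\ell\det X$; after normalization each $W_\ell$-entry becomes $(\pm 1\pm(-\ii)^\ell)/\sqrt2$. Splitting by $\ell\bmod 4$: when $\ell$ is odd, $(-\ii)^\ell\in\{\pm\ii\}$ and the four sign patterns produce exactly the 8th roots of unity in the two stated matrices; when $\ell$ is even, $(-\ii)^\ell=\pm1$, so half of the combinations collapse to $0$ while the other half equal $\pm\sqrt2$, giving the sparse matrices.

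The main obstacle is the last-coordinate shift in step three. Because $\omega^{2\ell}=\ii$ contributes a factor $\ii^{2c-1}$ whose sign depends on the parity of $c$, shifting $(2\mu)_\ell$ by $2$ is \emph{not} an entrywise sign flip of the bare powers $\omega^{(2\mu)_a}$; only after the two contributions $\omega^{(2\mu)_a(2\ell+1)}$ and $\omega^{-(2\mu)_a(2\ell+1)}$ recombine does the net effect on each entry of $A^+$ (respectively $A^-$) become a clean sign. Verifying this uniformly across all sixteen cases---most delicately for the doubly shifted $(\mu_2,\mu_2)$, where row and column operations on both $A^+$ and $A^-$ must be carefully composed---together with the bookkeeping of the $(-1)^{\binom\ell2}$ signs hidden in $\det X$, in the normalization, and in $\cnst$ itself, is the bulk of the computation.
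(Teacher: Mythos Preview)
Your proposal is correct, and the argument you outline would go through essentially as you describe. The sine-determinant identity you need in step two is exactly right: using $s(y)=c(\ell/2-y)$ one checks that column-reversal of $X$ together with multiplying the $a$-th row by $(-1)^{a+1}$ converts $X$ into the sine matrix, and the two signs $(-1)^{\lfloor\ell/2\rfloor}$ cancel to give $\det\bigl(s((2a-1)(2b-1)/4)\bigr)_{a,b}=\det X$. Your analysis of the shift $\mu_0\to\mu_2$ is also correct; with $\omega^{2\ell}=\ii$ and $\ii^{2a-1}=(-1)^{a-1}\ii$ one gets $\omega^{(2a-1)(2\ell+1)}=(-1)^{a-1}\ii\,\omega^{2a-1}$ and $\omega^{(2a-1)(2\ell-1)}=(-1)^{a-1}\ii\,\omega^{-(2a-1)}$, so the $\ell$-th column of $A^+$ flips sign while that of $A^-$ does not, exactly as you claim.

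Your route differs from the paper's in one genuine respect. The paper never computes $\det A^-$ directly; instead it derives the relation $S_{\mu_0,\lambda}=\beta\,\overline{S_{\mu_0,\lambda}}$ with $\beta=\varepsilon_\tau\prod_i\zeta_4^{-2\lambda_i}$ by exploiting the identity $\xi^{-(2i-1)}=\zeta_4^{-1}\xi^{2\tau(i)-1}$ for the reversal permutation $\tau(i)=\ell+1-i$. This forces a case split: for odd $\ell$ one has $\{\pm1\}^\ell=\Xi\cup(-\Xi)$, so $S+\overline S=\cnst\det X$ and the $\beta$-relation then solves for $S$; for even $\ell$ one has $\Xi=-\Xi$, so $S$ is real and $S(1-\beta)=0$ forces half the entries to vanish, the rest coming from the complement $\{\pm1\}^\ell\setminus\Xi$. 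Your decomposition via $\sum_{f\in\Xi}=\tfrac12\sum_f(1+\prod_a f_a)$ replaces this dichotomy by a single formula $S/\cnst=\tfrac12(\det A^++\det A^-)=\tfrac12(1+(-\ii)^\ell)\det X$ (for the base entry), from which all four residues of $\ell\bmod 4$ fall out uniformly. The price is the extra determinant calculation in step two; the gain is that you never need to treat odd and even $\ell$ by separate mechanisms. The modification-tracking step for the remaining fifteen entries is essentially the same in both approaches.
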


\begin{proof}
Let $\xi=\zeta_{8\ell}$, $\lambda\in H$ and $\alpha_i:=\zeta_4^{-2\lambda_i}$.
Since $2\lambda_i$ is odd for all $i=1,\ldots,\ell$, we have
\[ \alpha_i^{-1}=-\alpha_i,\quad
\prod_{i=1}^\ell \alpha_i^{f_i} = \prod_{i=1}^\ell f_i \alpha_i\]
for $f\in\{\pm 1\}^\ell$.
Remember that $\mu_0 = \frac{1}{2}(1,3,\ldots,2\ell-1)$ with respect
to the $v_i$. Thus
\begin{equation}\label{mu0lambda}
S_{\mu_0,\lambda} =
\cnst \sum_{\sigma\in S_\ell}\sum_{f\in\Xi} \varepsilon_\sigma
\prod_{i=1}^\ell \xi^{-(2i-1) f_{\sigma(i)} 2\lambda_{\sigma(i)}}.
\end{equation}
Using the permutation $\tau \in S_\ell$ given by $\tau(i):=\ell+1-i$, we may write
\[ \xi^{-(2i-1)} = \zeta_4^{-1} \xi^{2\tau(i)-1} \]
for all $i=1,\ldots,\ell$. Therefore
\begin{eqnarray*}
S_{\mu_0,\lambda} & = & \cnst \sum_{\sigma\in S_\ell}\sum_{f\in\Xi} \varepsilon_\sigma
\prod_{i=1}^\ell \xi^{(2i-1) f_{\sigma\tau(i)} 2\lambda_{\sigma\tau(i)}} \zeta_4^{-f_{\sigma\tau(i)}2\lambda_{\sigma\tau(i)}} \\
& = & \cnst \sum_{\sigma\in S_\ell}\sum_{f\in\Xi} \varepsilon_\sigma \varepsilon_\tau
\prod_{i=1}^\ell \xi^{(2i-1) f_{\sigma(i)} 2\lambda_{\sigma(i)}} \zeta_4^{-f_{\sigma(i)}2\lambda_{\sigma(i)}} \\
& = & \cnst \varepsilon_\tau \sum_{\sigma\in S_\ell}\sum_{f\in\Xi} \varepsilon_\sigma
\prod_{i=1}^\ell \xi^{(2i-1) f_{\sigma(i)} 2\lambda_{\sigma(i)}} \prod_{i=1}^\ell f_{\sigma(i)}\alpha_{\sigma(i)} \\
& = & \cnst \left[ \varepsilon_\tau \prod_{i=1}^\ell \alpha_i \right] \sum_{\sigma\in S_\ell}\sum_{f\in\Xi} \varepsilon_\sigma
\prod_{i=1}^\ell \xi^{(2i-1) f_{\sigma(i)} 2\lambda_{\sigma(i)}}  \\
& = & \beta \BAR{S_{\mu_0,\lambda}}
\end{eqnarray*}
where $\beta:=\varepsilon_\tau \prod_{i=1}^\ell \alpha_i$.

Assume first that $\ell$ is odd. Note that in this case,
$\{\pm 1\}^\ell=\Xi\cup-\Xi$.
Further, $\varepsilon_\tau=(-1)^m$ for $\ell=2m+1$.
For a label $\lambda\in H$,
we have $\sum_{i=1}^\ell 2\lambda_i\in\{\ell^2,\ell^2-2,\ell^2+2\}$.
So depending on $\lambda$, either $\beta=\zeta_4$ or $\beta=-\zeta_4$.
Consider now
\begin{eqnarray}
S_{\mu_0,\lambda}+\BAR{S_{\mu_0,\lambda}}
&=& \cnst \sum_{\sigma\in S_\ell}\sum_{f\in\{\pm 1\}^\ell} \varepsilon_\sigma
\prod_{i=1}^\ell \xi^{-(2i-1) f_{\sigma(i)} 2\lambda_{\sigma(i)}} \nonumber \\
&=& \cnst \det (\xi^{(2i-1)2\lambda_j}+\xi^{-(2i-1)2\lambda_j})_{i,j}. \label{ximat}
\end{eqnarray}
In particular for $\lambda=\mu_0$, we get
$S_{\mu_0,\mu_0}+\BAR{S_{\mu_0,\mu_0}} = \cnst (-1)^{\binom{\ell}{2}}
\sqrt{2^\ell \ell^\ell}$
by (\ref{xmat}), Lemma \ref{OXmat} and Lemma \ref{detomat}.
Hence
\[ S_{\mu_0,\mu_0}
= \frac{\cnst (-1)^{\binom{\ell}{2}}\sqrt{2^\ell \ell^\ell}}{1+(-1)^m\zeta_4}
= \cnst \gamma (-1)^{\binom{\ell}{2}}\sqrt{2^{\ell-1} \ell^\ell}, \]
where $\gamma=\frac{\sqrt2}{1+(-1)^m \zeta_4}$ or
\[ \gamma=\begin{cases}\BAR\zeta_8 & \ell \equiv 1 \:\:(\mbox{\rm mod } 4) \\
\zeta_8 & \ell \equiv 3 \:\:(\mbox{\rm mod } 4) \end{cases}.\]
For the remaining three labels, use the following technique:
Consider $\mu_1 := \frac{1}{2}(-1,3,5,\ldots,2\ell-1)$.
The difference between $\mu_1$ and $\mu_0$ is only in the first entry.
Replacing $1$ by $-1$ in the label corresponds to replacing each
$f=(f_1,\ldots,f_\ell)$ by $(-f_1,f_2,\ldots,f_\ell)$ and hence
$\Xi$ by $-\Xi$ since $\ell$ is odd.
Equation (\ref{mu0lambda}) therefore yields
\[ S_{\mu_0,\mu_1} = \BAR{S_{\mu_0,\mu_0}}. \]
Now for $\mu_2 := \frac{1}{2}(1,3,\ldots,2\ell-3,2\ell+1)$. Again, there
is only one difference between $\mu_2$ and $\mu_0$. Since $\xi^{2\ell+1}=-\xi^{-(2\ell-1)}$,
we get
\[ S_{\mu_0,\mu_2} = -\BAR{S_{\mu_0,\mu_0}}. \]
Thus for $\mu_3 := \frac{1}{2}(-1,3,5,\ldots,2\ell-3,2\ell+1)$,
for the same reasons,
\[ S_{\mu_0,\mu_3} = -{S_{\mu_0,\mu_0}}. \]
All other values are obtained in the same way.

Now assume that $\ell$ is even.
Further, $\varepsilon_\tau=(-1)^m$ for $\ell=2m$.
Depending on $\lambda$, $\prod_{i=1}^\ell \alpha_i$ is either $1$
or $-1$. Now $\Xi=-\Xi$ because $\ell$ is even. Hence
$S_{\mu_0,\lambda}=\BAR{S_{\mu_0,\lambda}}$ and
\[ S_{\mu_0,\lambda} (1-(-1)^m(\pm 1)) = 0. \]
When $m$ is odd, this gives the specific value $S_{\mu_0,\mu_0}=0$. Thus
\[ 0 = \cnst \sum_{\sigma\in S_\ell}\sum_{f\in\Xi} \varepsilon_\sigma
\prod_{i=1}^\ell \xi^{-(2i-1) f_{\sigma(i)}(2\sigma(i)-1)} \]
and so if we sum over $\{\pm1\}^\ell$, then the part over $\Xi$
may be ignored:
\begin{eqnarray*}
(-1)^{\binom{\ell}{2}}\sqrt{2^\ell \ell^\ell} &=&
\det (\xi^{(2i-1)(2j-1)}+\xi^{-(2i-1)(2j-1)})_{i,j} \\
&=& \sum_{\sigma\in S_\ell}\sum_{f\in\{\pm 1\}^\ell\backslash \Xi} \varepsilon_\sigma
\prod_{i=1}^\ell \xi^{-(2i-1) f_{\sigma(i)}(2\sigma(i)-1)}.
\end{eqnarray*}
This time, replacing $1$ by $-1$ in the label maps $\Xi$ to $\{\pm1\}^\ell
\setminus \Xi$, hence
\[ (-1)^{\binom{\ell}{2}}\sqrt{2^\ell \ell^\ell} =
\sum_{\sigma\in S_\ell}\sum_{f\in\Xi} \varepsilon_\sigma
\prod_{i=1}^\ell \xi^{-{\mu_1}_i f_{\sigma(i)}(2\sigma(i)-1)}
= \frac{1}{\cnst} S_{\mu_0,\mu_1}. \]
Similarly we get $S_{\mu_0,\mu_2} = -(-1)^{\binom{\ell}{2}}
\cnst \sqrt{2^\ell \ell^\ell}$ and
$S_{\mu_0,\mu_3} = 0$. All other values (also for $m$ even)
are obtained in the same way.
\end{proof}

\subsection{The $s$-matrix}
We compile our results and write them in terms of the $s$-matrix.
\begin{theor}
If we list the columns and rows in the following order: 
$\nu_0, \nu'_0, \nu'_\ell, \nu_\ell,  
\nu_1, \ldots, \nu_{\ell-1}, \mu_0,\ldots,\mu_3$,
then the $s$-matrix for even $\ell$ becomes
\[ {\tiny \left(\begin{array}{rrrr|rrrrr|rrrr}
1 & 1 & 1 & 1 & 2 & 2 & 2 & \ldots & 2 & \sqrt{\ell} & \sqrt{\ell}& \sqrt{\ell}& \sqrt{\ell} \\ 
1 & 1 & 1 & 1 & 2 & 2 & 2 & \ldots & 2 & -\sqrt{\ell} & -\sqrt{\ell}& -\sqrt{\ell}& -\sqrt{\ell} \\ 
1 & 1 & 1 & 1 & -2 & 2 & -2 & \ldots & -2 & -\ii^\ell\sqrt{\ell}& \ii^\ell\sqrt{\ell}& \ii^\ell\sqrt{\ell}& -\ii^\ell\sqrt{\ell} \\ 
1 & 1 & 1 & 1 & -2 & 2 & -2 & \ldots & -2 & \ii^\ell\sqrt{\ell} & -\ii^\ell\sqrt{\ell}& -\ii^\ell\sqrt{\ell}& \ii^\ell\sqrt{\ell} \\ \hline 
1 & 1 & -1 & -1 &  &  &  &  &  & 0 & 0 & 0 & 0 \\
1 & 1 & 1 & 1 &  &  & \vdots &  &  & 0 & 0 & 0 & 0  \\
1 & 1 & -1 & -1 &  & \ldots & c(ij) & \ldots &  & 0 & 0 & 0 & 0 \\
\vdots & \vdots & \vdots & \vdots &  &  & \vdots &  &  & \vdots & \vdots & \vdots & \vdots  \\
1 & 1 & -1 & -1 &  &  &  &  &  & 0 & 0 & 0 & 0 \\ \hline 
1 & -1 & -\ii^\ell & \ii^\ell & 0 & 0 & 0 & \ldots & 0 &  & & & \\ 
1 & -1 & \ii^\ell & -\ii^\ell & 0 & 0 & 0 & \ldots & 0 & & W_{\ell} & & \\ 
1 & -1 & \ii^\ell & -\ii^\ell & 0 & 0 & 0 & \ldots & 0 & & & & \\ 
1 & -1 & -\ii^\ell & \ii^\ell & 0 & 0 & 0 & \ldots & 0 & & & &  \end{array}\right),
} \]
and for odd $\ell$ it is
\[ {\tiny \left(\begin{array}{rrrr|rrrrr|rrrr}
1 & 1 & 1 & 1 & 2 & 2 & 2 & \ldots & 2 & \sqrt{\ell} & \sqrt{\ell}& \sqrt{\ell}& \sqrt{\ell} \\ 
1 & 1 & 1 & 1 & 2 & 2 & 2 & \ldots & 2 & -\sqrt{\ell} & -\sqrt{\ell}& -\sqrt{\ell}& -\sqrt{\ell} \\ 
1 & 1 & -1 & -1 & -2 & 2 & -2 & \ldots & 2 & \ii^\ell\sqrt{\ell}& -\ii^\ell\sqrt{\ell}& -\ii^\ell\sqrt{\ell}& \ii^\ell\sqrt{\ell} \\ 
1 & 1 & -1 & -1 & -2 & 2 & -2 & \ldots & 2 & -\ii^\ell\sqrt{\ell} & \ii^\ell\sqrt{\ell}& \ii^\ell\sqrt{\ell}& -\ii^\ell\sqrt{\ell} \\  \hline 
1 & 1 & -1 & -1 &  &  &  &  &  & 0 & 0 & 0 & 0 \\
1 & 1 & 1 & 1 &  &  & \vdots &  &  & 0 & 0 & 0 & 0  \\
1 & 1 & -1 & -1 &  & \ldots & c(ij) & \ldots &  & 0 & 0 & 0 & 0 \\
\vdots & \vdots & \vdots & \vdots &  &  & \vdots &  &  & \vdots & \vdots & \vdots & \vdots  \\
1 & 1 & 1 & 1 &  &  &  &  &  & 0 & 0 & 0 & 0 \\ \hline 
1 & -1 & \ii^\ell & -\ii^\ell & 0 & 0 & 0 & \ldots & 0 &  & & & \\ 
1 & -1 & -\ii^\ell & \ii^\ell & 0 & 0 & 0 & \ldots & 0 & & W_{\ell} & & \\ 
1 & -1 & -\ii^\ell & \ii^\ell & 0 & 0 & 0 & \ldots & 0 & & & & \\ 
1 & -1 & \ii^\ell & -\ii^\ell & 0 & 0 & 0 & \ldots & 0 & & & &  \end{array}\right),
} \]
where the specific form of $W_\ell$ is given by Proposition \ref{HH}.
\end{theor}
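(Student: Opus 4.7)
The proof is a compilation of the explicit $S$-matrix entries computed in Sections 5.1--5.3, together with the defining normalization $s_{\lambda,\mu} = S_{\lambda,\mu}/S_{\lambda,\nu_0}$. The ordering of rows and columns partitions the index set into the four ``exceptional'' labels $\{\nu_0,\nu'_0,\nu'_\ell,\nu_\ell\}$, the $\ell-1$ ``regular'' labels $\{\nu_1,\ldots,\nu_{\ell-1}\}$, and the four $H$-labels $\{\mu_0,\ldots,\mu_3\}$; the plan is to handle each of the nine resulting blocks in turn, checking that each entry matches the displayed matrix.

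The central $(\ell-1)\times(\ell-1)$ block is immediate from the formula $s_{\nu_i,\nu_j}=\rho_j c(ij)$ derived right after the first proposition of Section 5.1. The rows and columns indexed by the exceptional labels are handled by the two lemmas identifying $R^{\nu'_0,\lambda}$ with $R^{\nu_0,\lambda}$ and $R^{\nu'_\ell,\lambda}$ with $R^{\nu_\ell,\lambda}$ on $Z$-indices, which propagate through to equal $s$-values because the normalization $S_{\cdot,\nu_0}$ agrees within each pair; the $\pm 2$ and $\pm 1$ patterns then arise from evaluating $c(\ell j)=2(-1)^j$ and $c(\ell^2)=2(-1)^\ell$. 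For the $Z\times H$ and $H\times Z$ off-diagonal blocks, the observation following the second lemma of Section 5.2 gives $S_{\nu_i,\mu}=0$ for $1\le i\le \ell-1$, producing the large zero regions. The four nonzero exceptional rows and columns are populated by combining the sign-reversal lemmas $S_{\nu'_0,\mu}=-S_{\nu_0,\mu}$ and $S_{\nu'_\ell,\mu}=-S_{\nu_\ell,\mu}$ with the explicit evaluations in Lemma \ref{detomat} (giving $\sqrt{\ell}$) and the subsequent lemma (giving $\pm\ii^\ell\sqrt{\ell}$), each divided by the appropriate first-column entry.

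The $H\times H$ block is $W_\ell$ by Proposition \ref{HH}, whose common scalar factor $\cnst(-1)^{\binom{\ell}{2}}\sqrt{2^{\ell-1}\ell^\ell}$ pulled out there is exactly $S_{\mu_0,\nu_0}$ by Lemma \ref{detomat}, so the division leaves the displayed $4\times 4$ block unchanged. The main obstacle is not conceptual but clerical: one must carefully track the parity-dependent signs $(-1)^{\binom{\ell}{2}}=(-1)^{\lfloor\ell/2\rfloor}$, the factors $\ii^\ell$, and the alternating signs $(-1)^j$, so that the two cases of the theorem (even versus odd $\ell$) emerge with the precise sign placements displayed; one must also verify that the four exceptional labels are arranged in the order $\nu_0,\nu'_0,\nu'_\ell,\nu_\ell$ consistently in both rows and columns, so that the $\pm 1$ and $\pm\ii^\ell$ values in the two corner $4\times 4$ sub-blocks align as claimed.
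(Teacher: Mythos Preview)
Your proposal is correct and matches the paper's approach exactly: the paper presents this theorem as a straightforward compilation of the $S$-matrix entries computed in Sections~5.1--5.3, normalized via $s_{\lambda,\mu}=S_{\lambda,\mu}/S_{\lambda,\nu_0}$, and offers no additional argument beyond the preceding lemmas. Your block-by-block breakdown is in fact more explicit than what the paper writes, but the substance is identical.
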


Recall that $s_{\lambda,\mu} = \dfrac{S_{\lambda,\mu}}{S_{\lambda,\nu_0}}$,
$S=S^T$ and $S_{\lambda,\nu_0}>0$. So the complete $S$-matrix is given by the last
theorem and by:
\begin{propo}
The first row (column) of the $S$-matrix of type $D^{(1)}_\ell$ and level $2$ is
\[ \frac{1}{2\sqrt{2\ell}}
(1,1,1,1,2,\ldots,2,\sqrt{\ell},\sqrt{\ell},\sqrt{\ell},\sqrt{\ell}). \]
The $T$-matrix is
\[ T = \zeta_{24(\ell-1)}^{-\ell(\ell+1)(2\ell+1)}
\diag\left(\zeta_{4\ell}^{(\lambda\mid\lambda)}\right)_{\lambda\in P_\rho^+}. \]
\end{propo}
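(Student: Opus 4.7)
I would treat the two claims independently.

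For the first row of $S$, the preceding $s$-matrix theorem already records every ratio $s_{\nu_0,\mu}=S_{\nu_0,\mu}/S_{\nu_0,\nu_0}$: four $1$'s (at $\nu_0,\nu'_0,\nu'_\ell,\nu_\ell$), $\ell-1$ twos (at $\nu_1,\ldots,\nu_{\ell-1}$), and four $\sqrt\ell$'s (at $\mu_0,\mu_1,\mu_2,\mu_3$). Unitarity $\BAR S^T S=\id$, applied to the first row, gives
\[
1=\sum_{\mu}S_{\nu_0,\mu}^{2}=S_{\nu_0,\nu_0}^{2}\sum_{\mu}s_{\nu_0,\mu}^{2}
=S_{\nu_0,\nu_0}^{2}\bigl(4+4(\ell-1)+4\ell\bigr)=8\ell\,S_{\nu_0,\nu_0}^{2}.
\]
Since $S_{\nu_0,\nu_0}>0$, I conclude $S_{\nu_0,\nu_0}=1/(2\sqrt{2\ell})$, and scaling the vector of ratios through by this scalar produces the stated first row.

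For the $T$-matrix, I would invoke Kac's formula for the modular $T$-action on level-$k$ characters \cite[Thm.~13.8]{vK}, which expresses each diagonal entry as
\[
T_{\lambda,\lambda}=\exp\!\left(2\pi\ii\!\left[\frac{(\lambda\mid\lambda)}{2(k+h^{\vee})}-\frac{(\BAR\rho\mid\BAR\rho)}{2h^{\vee}}\right]\right),
\]
where $\lambda\in P_\rho^{+}$ is the $\BAR\rho$-shifted weight used throughout the paper. Substituting $k+h^{\vee}=2\ell$ turns the first contribution into exactly $\zeta_{4\ell}^{(\lambda\mid\lambda)}$, so $T$ already has the asserted diagonal shape up to a $\lambda$-independent scalar. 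Using $h^{\vee}=2(\ell-1)$ and the section~3 evaluation $(\BAR\rho\mid\BAR\rho)=\sum_{j=0}^{\ell-1}j^{2}=(\ell-1)\ell(2\ell-1)/6$, I would then rewrite the exponent of the remaining scalar over the common denominator $24(\ell-1)$, and use $\ell(\ell+1)(2\ell+1)=6\sum_{j=1}^{\ell}j^{2}$ to bring the numerator into the claimed form $\zeta_{24(\ell-1)}^{-\ell(\ell+1)(2\ell+1)}$.

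\textbf{Main obstacle.} Neither step requires any new idea: the first is a single application of unitarity, and the second is a direct specialization of a standard formula. The only genuinely careful step is the closing arithmetic manipulation of the scalar prefactor of $T$ into the precise form in the statement; several equivalent expressions for the same root of unity are possible, and one must land on exactly the one displayed. I would double-check this by evaluating the prefactor at the vacuum $\lambda=\nu_0$ and matching against $\exp(-2\pi\ii\,c/24)$ with $c=2\ell-1$ (the central charge of $D^{(1)}_{\ell,2}$), which serves as an independent sanity check.
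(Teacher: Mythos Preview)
Your proposal is correct and follows essentially the same approach as the paper: compute the squared norm of the first row of $s$ to get $8\ell$, use positivity and unitarity to fix $S_{\nu_0,\nu_0}$, and invoke \cite[13.8]{vK} for $T$. The paper's own proof is in fact much terser than yours---it simply records $\sum_\lambda s_{\nu_0,\lambda}\BAR{s_{\nu_0,\lambda}}=8\ell$ and defers the $T$-matrix entirely to Kac without carrying out the arithmetic on the prefactor---so your version supplies strictly more detail along the same line.
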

\begin{proof}
The first row of $S$ has positive real entries and has norm $1$.
The normalization factor for the first row of $s$ is
$\sum_{\lambda\in P_\rho^+} s_{\nu_0,\lambda} \BAR{s_{\nu_0,\lambda}} = 8\ell$.
The $T$-matrix is given in \cite[13.8]{vK}.
\end{proof}

\section{Representations of the vertex operator algebra $V_L^+$}
We largely follow the notation of \cite{tAcDhL} and \cite{tA}.  Let $L =\sqrt{2\ell}\, \ZZ$.  Then $L$ is an even lattice of rank 1 with dual lattice $L^\circ = \dfrac{1}{\sqrt{2\ell}}\ZZ$.  The complete list of inequivalent irreducible representations for the vertex operator algebra $V_L^+$ is: 
\bi
\item $[i]$ for $1 \leq i \leq \ell - 1$ (corresponding to elements $\frac{i}{\sqrt{2\ell}} + L$ in the quotient lattice $L^\circ/L$ for which $\frac{2i}{\sqrt{2\ell}} \notin L$); 
\item $[0]^\pm$ and $[\ell]^\pm$ (corresponding to those elements $x+L$ of $L^\circ/L$ for which $2x\in L$);
\item $[\chi_1]^\pm$ and $[\chi_2]^\pm$ (corresponding to twisted $V_L$ modules).
\ei
By abuse of notation, and to simplify the fusion rules, we write 
\begin{equation}
[0] = [0]^+ + [0]^-\ \ \text{and}\ \ [\ell] = [\ell]^+ + [\ell]^-, \label{abuse}
\end{equation} 
and we note that $[i] = [-i] = [2\ell-i]$ for $0 \le i \le \ell$.  (In particular $[\ell + i] = [\ell - i]$.)  Notice that there are $\ell+7$ inequivalent irreducible representations.  

Let $1 \leq i,j \leq \ell-1$ and let $\epsilon, \epsilon_1, \epsilon_2 \in \{\pm\}$ with multiplicative product (i.e. $(-)(-)= (+)$).  For general $\ell$, we have:
\bean
[i] \ot [j] & = & [i+j] + [i-j] \label{voa-ij} \\ 
\left[ 0 \right]^\epsilon \ot [i] & = & [i] \label{voa-0i} \\
\left[ \ell \right]^\epsilon \ot [i] & = & \left[ \ell-i \right] \label{voa-li} \\
\left[ 0 \right]^{\epsilon_1} \ot \left[ m \right]^{\epsilon_2} & = & \left[ m \right]^{\epsilon_1\epsilon_2}\ \textrm{for $[m] \in \{[0],[\ell],[\chi_1],[\chi_2]\}$} \label{voa-0m} \\ 
\left[\chi_1 \right]^{\epsilon} \ot \left[i \right] & = & \left\{  \begin{array}{cl} \left[\chi_1 \right]^+ + \left[\chi_1 \right]^- & \textrm{for $i$ even} \\ \left[\chi_2 \right]^+ + \left[\chi_2 \right]^- &  \textrm{for $i$ odd} \end{array} \right. \label{voa-chi1i}  \\
\left[\chi_2 \right]^{\epsilon} \ot \left[i \right] & = & \left\{  \begin{array}{cl} \left[\chi_2 \right]^+ + \left[\chi_2 \right]^- & \textrm{for $i$ even} \\ \left[\chi_1 \right]^+ + \left[\chi_1 \right]^- &  \textrm{for $i$ odd} \end{array} \right.  \label{voa-chi2i} 
\eean
So $[0]^+$ is the identity of the fusion algebra.  The remaining fusion rules depend on whether $\ell$ is even or odd.  

For even $\ell$, we have:
\bean
\left[\ell \right]^{\epsilon_1} \ot \left[\ell \right]^{\epsilon_2} & = & \left[0 \right]^{\epsilon_1\epsilon_2} \label{voae-ll} \\
\left[\ell \right]^{\epsilon_1} \ot \left[\chi_1 \right]^{\epsilon_2} & = & \left[\chi_1 \right]^{\epsilon_1\epsilon_2} \label{voae-lchi1} \\
\left[\ell \right]^{\epsilon_1} \ot \left[\chi_2 \right]^{\epsilon_2} & = & \left[\chi_2 \right]^{-\epsilon_1\epsilon_2} \label{voae-lchi2} \\
\hspace{15mm} \left[\chi_1 \right]^{\epsilon_1} \ot \left[\chi_1 \right]^{\epsilon_2} & = & \left[0 \right]^{\epsilon_1\epsilon_2} + \left[\ell \right]^{\epsilon_1\epsilon_2} + [2] + [4] + \ldots + [\ell-2] \label{voae-chi11} \\
\left[\chi_1 \right]^{\epsilon_1} \ot \left[\chi_2 \right]^{\epsilon_2} & = & [1] + [3] + \ldots + [\ell-1] \label{voae-chi12} \\
\left[\chi_2 \right]^{\epsilon_1} \ot \left[\chi_2 \right]^{\epsilon_2} & = & \left[0 \right]^{\epsilon_1\epsilon_2} + \left[\ell \right]^{-\epsilon_1\epsilon_2} + [2] + [4] + \ldots + [\ell-2] \label{voae-chi22} 
\eean
Note that if $\ell$ is even, then all irreducible representations are self-dual.

For odd $\ell$, we have instead:
\bean
\left[\ell \right]^{\epsilon_1} \ot \left[\ell \right]^{\epsilon_2} & = & \left[0 \right]^{-\epsilon_1\epsilon_2} \label{voao-ll} \\
\left[\ell \right]^{\epsilon_1} \ot \left[\chi_1 \right]^{\epsilon_2} & = & \left[\chi_2 \right]^{-\epsilon_1\epsilon_2} \label{voao-lchi1} \\
\left[\ell \right]^{\epsilon_1} \ot \left[\chi_2 \right]^{\epsilon_2} & = & \left[\chi_1 \right]^{\epsilon_1\epsilon_2} \label{voao-lchi2} \\
\left[\chi_1 \right]^{\epsilon_1} \ot \left[\chi_1 \right]^{\epsilon_2} & = & \left[\ell \right]^{\epsilon_1\epsilon_2} + [1] + [3] + \ldots + \left[ \ell-2\right ] \label{voao-chi11} \\
\left[\chi_1 \right]^{\epsilon_1} \ot \left[\chi_2 \right]^{\epsilon_2} & = & \left[ 0 \right]^{\epsilon_1\epsilon_2} + [2] + [4] + \ldots + [\ell-1] \label{voao-chi12} \\
\left[\chi_2 \right]^{\epsilon_1} \ot \left[\chi_2 \right]^{\epsilon_2} & = & \left[\ell \right]^{-\epsilon_1\epsilon_2} + [1] + [3] + \ldots + [\ell-2] \label{voao-chi22} 
\eean
Note that if $\ell$ is odd, then $(\left[\ell\right]^\epsilon)^* = \left[\ell\right]^{-\epsilon}$ and $(\left[\chi_1 \right]^\epsilon)^* = \left[\chi_2\right]^{\epsilon}$.

\section{Isomorphism of fusion algebras}

\begin{theor} \label{mainthm}
Let $L = \sqrt{2\ell}\, \ZZ$ for some $\ell \in \NN$. Then the fusion algebra for the vertex operator
algebra $V_L^+$ is isomorphic to the fusion algebra
\[
\left\{ \begin{array}{cl} D^{(1)}_{\ell,2} & \text{if $\ell \ge 3$} \\
A^{(1)}_{1,2} \ot A^{(1)}_{1,2} & \text{if $\ell = 2$} \\
A^{(1)}_{7,1} \cong \ZZ[\ZZ/8\ZZ] & \text{if $\ell = 1$}.   \end{array} \right.
\] 
(Recall that $D^{(1)}_{3,2} \cong A^{(1)}_{3,2}$.)
\end{theor}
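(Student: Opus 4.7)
The plan for $\ell \ge 3$ is to exhibit an explicit bijection $\Phi$ from the $\ell+7$ irreducible representations of $V_L^+$ to the $\ell+7$ labels in $P_\rho^2$ and then to verify, case by case, that the fusion products listed in Section~6 match the structure constants of $D^{(1)}_{\ell,2}$. Since a fusion algebra is determined by its character table, and Section~5 supplies a completely explicit form of the $s$-matrix with a convenient block structure, I would apply Verlinde's formula~(\ref{verl}) block-by-block using the cosine identities (\ref{rhocij}) and (\ref{oddcos}), rather than recomputing a character table for $V_L^+$ from scratch.

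The candidate bijection is $\Phi([i])=\nu_i$ for $1\le i\le \ell-1$, $\Phi([0]^+)=\nu_0$, $\Phi([0]^-)=\nu_0'$, together with assignments $\{[\ell]^+,[\ell]^-\}\to\{\nu_\ell,\nu_\ell'\}$ and $\{[\chi_1]^\pm,[\chi_2]^\pm\}\to\{\mu_0,\mu_1,\mu_2,\mu_3\}$. The particular matchings on the last two quadruples depend on the parity of $\ell$ (and on $\ell\pmod 4$ for the $\mu$-block) because the matrix $W_\ell$ of Proposition~\ref{HH} takes four distinct forms. With $\Phi([0]^+)=\nu_0$ the unit maps to the unit, and the other choices are then essentially forced up to the swaps $[\ell]^+\leftrightarrow[\ell]^-$ and $[\chi_1]\leftrightarrow[\chi_2]$.

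The verification splits along the block structure of $s$. Products among $\nu_1,\ldots,\nu_{\ell-1}$ follow from the formula $s_{\nu_a,\nu_i}=\rho_i c(ai)$: the product-to-sum identity $c(\alpha)c(\beta)=c(\alpha+\beta)+c(\alpha-\beta)$ together with (\ref{rhocij}) collapses Verlinde to $\nu_i\cdot\nu_j=\nu_{i+j}+\nu_{|i-j|}$, with the boundary conventions $\nu_{-k}=\nu_k$, $\nu_{\ell+k}=\nu_{\ell-k}$, and with $\nu_0,\nu_\ell$ reinterpreted as $\nu_0+\nu_0'$, $\nu_\ell+\nu_\ell'$ whenever they appear; this reproduces~(\ref{voa-ij}). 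Products among the four labels $\nu_0,\nu_0',\nu_\ell,\nu_\ell'$ reduce to the upper-left $4\times 4$ block of $s$, which is (up to a row/column permutation) a character table of $(\ZZ/2\ZZ)^2$, in agreement with~(\ref{voa-0m}) and (\ref{voae-ll}) or (\ref{voao-ll}). The zeros in the $s$-matrix immediately force $\nu_i\cdot\mu_j$ to be a combination of the $\mu_k$ only; the explicit coefficients appearing in~(\ref{voa-chi1i})--(\ref{voa-chi2i}) are then obtained by reading off a single column of $W_\ell$ and using (\ref{oddcos}) to collect the $\nu_i$ of a fixed parity. Finally, the $\mu\cdot\mu$ products come from the matrix $W_\ell\cdot\BAR{W_\ell}^{T}$ together with $\nu$-contributions simplified once more by (\ref{oddcos}); the results match~(\ref{voae-chi11})--(\ref{voae-chi22}) or (\ref{voao-chi11})--(\ref{voao-chi22}) case by case.

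The two small cases need separate direct checks. When $\ell=1$ there are no $\nu_i$ or $[i]$ labels and the eight basis elements form a group under fusion; one verifies that $[\chi_1]^+$ has order $8$ by iterating the odd-$\ell$ rules, so the algebra is $\ZZ[\ZZ/8\ZZ]\cong A^{(1)}_{7,1}$. For $\ell=2$ one checks directly that the $9$-dimensional $V_L^+$ fusion algebra splits as a tensor product of two copies of $A^{(1)}_{1,2}$, corresponding to $D_2\cong A_1\times A_1$. The main obstacle throughout is sign bookkeeping in the $\chi$-block: the bijection on $\{[\ell]^\pm\}$ and $\{[\chi_1]^\pm,[\chi_2]^\pm\}$ must be chosen so that all the $\epsilon_1\epsilon_2$-signs in~(\ref{voae-ll})--(\ref{voao-chi22}) are consistent with the specific form of $W_\ell$ for the correct congruence class of $\ell$ and with the factor $\ii^\ell$ in the $s$-matrix; a wrong first choice inverts a sign in one of the $\chi\ot\chi$ rules and forces a recomposition of $\Phi$ with a swap.
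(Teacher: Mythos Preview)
Your proposal is correct and follows essentially the same approach as the paper: fix an explicit bijection sending $[0]^\pm,[\ell]^\pm,[i],[\chi_j]^\pm$ to $\nu_0,\nu_0',\nu_\ell',\nu_\ell,\nu_i,\mu_k$ and then verify the fusion rules of Section~6 against the block structure of the $s$-matrix of Section~5, using the cosine identities and handling the $\mu$-block according to $\ell\pmod 4$. The only cosmetic difference is that the paper phrases the verification as ``componentwise multiplication of columns of $s$'' rather than ``Verlinde's formula'', but as noted after~(\ref{verl}) these are the same thing, and the paper in fact commits to a single fixed ordering $[\chi_2]^+,[\chi_1]^+,[\chi_1]^-,[\chi_2]^-\leftrightarrow\mu_0,\mu_1,\mu_2,\mu_3$ for all $\ell$ rather than leaving it parity-dependent.
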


\begin{proof}  The special cases $\ell = 1,2$ were studied in \cite{cDkN} and \cite{cDrGgH}, respectively.  When $\ell = 1$, the fusion rules are generated by $[\chi_1]^+$, say, which has order 8 in the fusion algebra.  When $\ell = 2$, then $V_L^+$ is isomorphic to $L(\frac12,0) \ot L(\frac12,0)$, and $L(\frac12,0)$ (which is the irreducible highest weight module for the Virasoro algebra with central charge $\frac12$ and highest weight $0$) has isomorphic fusion with $A^{(1)}_{1,2}$.  

Now let $\ell \geq 3$.  To prove the desired isomorphism of fusion algebras, we will list the irreducible VOA modules in the following order: 
$$[0]^+,
[0]^-,
[\ell]^+,
[\ell]^-,
[1],
\ldots,
[\ell-1],
[\chi_2]^+,
[\chi_1]^+,
[\chi_1]^-,
[\chi_2]^-.$$
Now we simply check that the fusion rules are satisfied by the corresponding columns of $s$ under componentwise multiplication, implying that $s$ is also a valid $s$-matrix for the fusion algebra of $V_L^+$ by (\ref{verl}).

Equations (\ref{voa-0i}) and (\ref{voa-0m}) are clear by inspection for $\ell$ even or odd, due to the structure of $W_\ell$ in each case.   The cosine identity $c((\ell-i)j) = (-1)^j c(ij)$ explains (\ref{voa-li}) for all $\ell$, when one takes into account the parity of $\ell$.  Similarly, the cosine identity 
$c(\alpha)c(\beta) = c(\alpha+\beta) + c(\alpha-\beta)$,
together with the notational simplifications of (\ref{abuse}), implies (\ref{voa-ij}) for all $\ell$.  (Note that (\ref{abuse}) implies that the last four entries of the column representing $[i]$ are zero for $0 \leq i \leq \ell$.)

Consider (\ref{voa-chi1i}).  When multiplying the corresponding column entries, only the first four will be nonzero.  By inspection, the product is equal to $[\chi_1]^+ + [\chi_1]^-$ if $i$ is even and $[\chi_2]^+ + [\chi_2]^-$ if $i$ is odd, due in part to the structure of $W_\ell$.  Equation (\ref{voa-chi2i}) is similar.  

This completes the portion of the proof that holds for general $\ell$.

Now let $\ell$ be a multiple of 4 (and so $\ii^\ell = 1$).  Then (\ref{voae-ll}) implies that the first four representations generate a subring isomorphic to $\ZZ[\ZZ/2\ZZ \times \ZZ/2\ZZ]$, consistent with (the last four entries in) the first four columns of $s$.  Equations (\ref{voae-lchi1}) and (\ref{voae-lchi2}) follow directly from the specific form of $W_\ell$ in this case. 

Consider (\ref{voae-chi12}).  Only the first four entries of the product of the columns are different from zero, and they are $(\ell,\ell,-\ell,-\ell)$.  Compare this with the sum of the odd columns $[1] + [3] + \ldots + [\ell-1]$.  Clearly the first four and last four entries match up.  Also, the entry for the row corresponding to $[i]$ is
$c(i) + c(3i) + \ldots + c((\ell-1)i) = \frac{s(\ell i)c(\frac{\ell i}{2})}{s(i)} = 0$
by (\ref{cosphi}).

For the left hand side of equation (\ref{voae-chi11}), the first four entries are $\ell$, followed by $\ell-1$ zeroes, and finally followed by $\epsilon_1\epsilon_2(0,2,2,0)$.  For the right hand side, each of the first four entries is also $\ell$.  The last four entries of the sum of the columns for $[0]^{\epsilon_1\epsilon_2}$ and $[\ell]^{\epsilon_1\epsilon_2}$ (and $[2] + \ldots + [\ell-2]$) are $\epsilon_1\epsilon_2(0,2,2,0)$.  Consider now the entry in the row for $[i]$:
$1 + (-1)^i + c(2i) + \ldots + c((\ell-2)i) = (-1)^i + \frac{s((\ell-1)i)}{s(i)} = 0$
by (\ref{DK}).

The argument proving (\ref{voae-chi22}) is similar, only now the last four entries of the column representing the left hand side are $\epsilon_1\epsilon_2(2,0,0,2)$.  So the only change is that now we need $[0]^{\epsilon_1\epsilon_2} + [\ell]^{-\epsilon_1\epsilon_2} + [2] + \ldots + [\ell-2]$.

The case when $4 | \ell$ is complete.  Suppose now that $\ell \equiv 2$ mod 4.  In this case, the general arguments remain the same, but now $\ii^\ell=-1$ and $W_\ell$ has changed.   The reader can check that (\ref{voae-ll}), (\ref{voae-lchi1}), (\ref{voae-lchi2}) and (\ref{voae-chi12}) remain valid in this case.  Moreover, similar arguments as in the $\ell \equiv 0$ mod 4 case verify (\ref{voae-chi11}) and (\ref{voae-chi22}).

Let $\ell \equiv 1$ mod 4 (and so $\ii^\ell = \ii$).  Then (\ref{voao-ll}) implies that the first four representations generate a subring isomorphic to $\ZZ[\ZZ/4\ZZ]$, consistent with (the last four entries in) the first four columns of $s$.

Consider one case of (\ref{voao-lchi1}), e.g. $[\ell]^+ \ot [\chi_1]^+$.  Certainly the first $\ell-3$ entries are consistent with $[\chi_2]^-$.  The last four entries of the left hand side are $(\ii \zeta_8, -\ii \zeta_8^7, -\ii \zeta_8^3, \ii \zeta_8^5) = (\zeta_8^3,\zeta_8^5,\zeta_8,\zeta_8^7)$, which corresponds to the last column of $W_\ell$, i.e. $[\chi_2]^-$.  Notice that changing the sign of $\epsilon$ in $[\ell]^\epsilon$, $[\chi_1]^\epsilon$, or $[\chi_2]^\epsilon$ changes the sign of only the last four entries of each column.  So (\ref{voao-lchi1}) follows.  Similarly, to show (\ref{voao-lchi2}), we need only verify one instance.
Consider the last four entries of $[\ell]^+ \ot [\chi_2]^+$, which are $(\ii \zeta_8^7,-\ii \zeta_8,-\ii\zeta_8^5,\ii \zeta_8^3) = (\zeta_8,\zeta_8^7,\zeta_8^3,\zeta_8^5)$,
consistent with $[\chi_1]^+$.

Now consider (\ref{voao-chi11}).  The left hand side begins $(\ell,\ell,-\ell,-\ell)$, followed by $\ell-1$ zeroes, and then $\epsilon_1\epsilon_2(\ii,-\ii,-\ii,\ii)$.  The first four entries and the last four entries are consistent with $[\ell]^{\epsilon_1\epsilon_2} + [1] + \ldots + [\ell-2]$.  The entry in the row for $[i]$ is
$(-1)^i + c(i) + c(3i) + \ldots + c((\ell-2)i) =  (-1)^i + \frac{s((\ell-1)i)}{s(i)} = 0$
by (\ref{cosphi}) and other trigonometric identities.  Similarly, the left hand side of (\ref{voao-chi22}) has the same first $\ell-3$ entries as above but has $\epsilon_1\epsilon_2(-\ii,\ii,\ii,-\ii)$ as its last four entries, which is consistent with $[\ell]^{-\epsilon_1\epsilon_2} + [1] + \ldots + [\ell-2]$.

The left hand side of (\ref{voao-chi12}) begins $(\ell,\ell,\ell,\ell)$, followed by $\ell-1$ zeroes, and then $\epsilon_1\epsilon_2(1,1,1,1)$.  Clearly, this matches the first four entries and the last four entries of the right hand side.  The entry in the row corresponding to $[i]$ is
$1 + c(2i) + c(4i) + \ldots + c((\ell-1)i) = \frac{s(\ell i)}{s(i)} = 0$
by (\ref{DK}).

This concludes the proof when $\ell \equiv 1$ mod 4.  But when $\ell \equiv 3$ mod 4, then we essentially are taking the complex conjugate of the $s$-matrix in the $\ell \equiv 1$ mod 4 case.  So all the arguments above remain valid, \emph{mutatis mutandis}, as complex conjugation is $\sim$ and hence a fusion algebra morphism.
\end{proof}

\bibliographystyle{amsplain}
\bibliography{references}

\end{document}